\numberwithin{equation}{section}
\newtheorem{theorem}{Theorem}[section]
\newtheorem{proposition}[theorem]{Proposition}
\newtheorem{lemma}[theorem]{Lemma}
\newtheorem{corollary}[theorem]{Corollary}
\theoremstyle{definition}
\newtheorem{definition}[theorem]{Definition}
\newtheorem{remark}[theorem]{Remark}
\renewcommand{\theta}{\vartheta}
\newcommand{\T}{{\mathbb T}}
\newcommand{\Z}{{\mathbb Z}}
\newcommand{\R}{{\mathbb R}}
\newcommand{\N}{{\mathbb N}}
\newcommand{\A}{{\mathbb A}}
\renewcommand{\phi}{\varphi}
\begin{document}

\title[Periodic motions of a particle in an oscillating field on the two-torus]{On the periodic motions of a charged particle\\ in an oscillating magnetic field on the two-torus}

\author{Luca Asselle}
\address{Ruhr-Universit\"at Bochum, Fakult\"at f\"ur Mathematik, NA 4/35, Universit\"atsstra\ss e 150, D-44780 Bochum, Germany}
\email{\href{mailto:luca.asselle@ruhr-uni-bochum.de}{luca.asselle@ruhr-uni-bochum.de}}
\author{Gabriele Benedetti}
\address{Universit\"at Leipzig, Mathematisches Institut, Augustusplatz 10, D-04109 Leipzig, Germany}
\email{\href{mailto:gabriele.benedetti@math.uni-leipzig.de}{gabriele.benedetti@math.uni-leipzig.de}}

\subjclass[2010]{37J45, 58E05}

\keywords{Dynamical systems, Periodic orbits, Symplectic geometry, Magnetic flows}
\date{\today}

\begin{abstract}
Let $(\T^2,g)$ be a Riemannian two-torus and let $\sigma$ be an oscillating $2$-form on $\T^2$. We show that for almost every small positive number $k$ the magnetic flow of the pair $(g,\sigma)$ has infinitely many periodic orbits with energy $k$. This result complements the analogous statement for closed surfaces of genus at least $2$ \cite{AB15a} and at the same time extends the main theorem of \cite{AMMP14} to the non-exact oscillating case. 
\end{abstract}

\maketitle

\section{Introduction}

Let $(M,g)$ be a closed connected orientable Riemannian surface and let $\sigma$ be a 2-form on $M$. We call $\sigma$ the \emph{magnetic form}. We denote with $\omega_g$ the standard symplectic form on $TM$ obtained by pulling back the canonical symplectic form on $T^*M$ via the Riemannian metric and with
$\omega_{g,\sigma}:= \omega_g + \pi^*\sigma$
the \textit{twisted symplectic form} determined by the pair $(g,\sigma)$, where $\pi:TM\to M$ is the projection. The Hamiltonian flow on the symplectic manifold $(TM,\omega_{g,\sigma})$ associated with the kinetic energy
\[E(q,v) = \frac{1}{2}\, |v|_q^2\]
is called the \textit{magnetic flow of the pair} $(g,\sigma)$ and the trajectories of such a flow are called \textit{magnetic geodesics}.
Indeed, this flow models the motion of a charged particle under the effect of a magnetic field represented by $\sigma$. More precisely, let $\nabla$ be the \textit{Levi-Civita connection} of $g$ and $Y_{g,\sigma}:TM\rightarrow TM$ be the \textit{Lorentz force} defined by the formula
\begin{equation*}
g_q(u,Y_{g,\sigma}(q,v))= \sigma_q(u,v),\quad\forall\, q\in M,\ \forall\, u,v\in T_qM.
\end{equation*}
A curve $\gamma:\R\rightarrow M$ solves the equation
\begin{equation}\label{eq:lor}
\nabla_{\dot \gamma}\dot\gamma= Y_{g,\sigma}(\gamma,\dot\gamma)
\end{equation} 
if and only if $(\gamma,\dot\gamma):\R\rightarrow TM$ is a trajectory of the Hamiltonian flow of $E$.

Following \cite{Arn61}, Equation \eqref{eq:lor} can be equivalently formulated in terms of the \textit{geodesic curvature} $\kappa_g$ of the curve $\gamma$. Indeed, let $\mu_g$ be the area form associated with $g$ and let $f:M\rightarrow \R$ be the unique function such that $\sigma=f\mu_g$. We call $f$ the \textit{density} of $\sigma$ with respect to $\mu_g$. For all $k>0$, a curve $\gamma:\R\rightarrow M$ with constant speed $\sqrt{2k}$ satisfies \eqref{eq:lor} if and only if
\begin{equation}
\kappa_g(\gamma)= -\,\frac{f(\gamma)}{\sqrt{2k}}.
\end{equation} 
For example, this shows that, when $M=S^2=\{x^2+y^2+z^2=1\}\subset \R^3$, $g$ is the round metric and $\sigma=\mu_g$, every magnetic geodesic is supported on the intersection of $S^2$ with some affine plane in $\R^3$ not passing through the origin.

A central question in the study of the dynamics of magnetic flows is the existence of \emph{closed} magnetic geodesics. In \cite{AMMP14} it is shown that if $\sigma=d\theta$ is exact, then for almost every $k\in(0,c_u(L_\theta))$ the energy level $E^{-1}(k)$ carries infinitely many geometrically distinct closed magnetic geodesics. Here $c_u(L_\theta)$ denotes the \textit{Ma\~n\'e critical value of the universal cover} (see \cite{Con06} or \cite{Abb13} for the precise definition) of the Lagrangian
\begin{equation}
L_\theta(q,v)= \frac{1}{2}|v|^2_q+\theta_q(v).
\label{lagrangianalocale}
\end{equation}

One of the research directions undertaken by the authors of this paper is to extend such result to the case of an \textit{oscillating} $\sigma$.
\begin{definition}
We say that $\sigma$ is $\textit{oscillating}$ if there exist $q_-,q_+\in M$ such that $\sigma_{q_-}<0$ and $\sigma_{q_+}>0$.
\end{definition}
We notice that oscillating forms are a natural generalization of the exact ones, since we can think of exact forms as ``balanced'' oscillating forms, their integral over $M$ being zero. We already showed in \cite{AB15a} that the result proved in \cite{AMMP14} for exact forms extends to oscillating forms when $M$ has genus at least $2$ and $c_u(L_\theta)$ is replaced by some $\tau_+^*(g,\sigma)\in(0,c_u(L_\theta)]$ (observe that $c_u(L_\theta)$ is still well-defined since the lift of $\sigma$ to the universal cover is exact). Implementing ideas contained in \cite{AB14}, we are now able to treat the case in which $M=\T^2$ is the two-torus. After we submitted our manuscript, the case of the two-sphere has also been solved by the authors in collaboration with Abbondandolo, Mazzucchelli and Taimanov \cite{AABMT16}.

The aim of the present paper is therefore to prove the following

\begin{theorem}\label{theorem:main}
Let $\sigma$ be an oscillating 2-form on $(\T^2,g)$. There exists a positive real number $\tau_+(g,\sigma)>0$ such that for almost every $k\in (0,\tau_+(g,\sigma))$ the energy level $E^{-1}(k)$ carries infinitely many geometrically distinct closed magnetic geodesics.
\end{theorem}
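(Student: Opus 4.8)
The plan is to combine two ingredients. The first is a Struwe-type monotonicity argument which, for almost every energy $k$ below a threshold $\tau_+(g,\sigma)>0$ that I will define below, produces a non-constant closed magnetic geodesic with energy $k$ that is a local minimum of the free-period action functional. The second is a Bangert--Hingston type iteration argument, already used to prove the corresponding statement for exact forms, which upgrades a single isolated such local minimum to infinitely many geometrically distinct closed magnetic geodesics on the same energy level. The whole proof is then a dichotomy: either $E^{-1}(k)$ carries infinitely many closed magnetic geodesics, or the local minimum is isolated and the iteration argument produces infinitely many anyway.

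First I would set up the variational framework. Since $\T^2$ is aspherical and $\pi_2(\T^2)=0$, every contractible loop $\gamma$ admits a disk filling that is unique up to homotopy rel boundary, so the quantity $A(\gamma):=\int_u\sigma$ (for $\partial u=\gamma$) is well defined and $C^0$-continuous on the space $\Lambda_0$ of contractible $W^{1,2}$ loops; hence the free-period action functional
\[
\mathcal{S}_k(\gamma,T)=\frac{1}{2T}\int_0^1|\dot\gamma|^2\,dt+A(\gamma)+kT
\]
is well defined on $\Lambda_0\times(0,\infty)$ even though $\sigma$ need not be exact, and its critical points with $T>0$ are, after reparametrisation, exactly the closed magnetic geodesics with energy $k$. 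One checks that $\mathcal{S}_k$ is bounded below by $-\int_{\T^2}(f)_-\,\mu_g$ for every $k>0$ (a disk filling on $\T^2$ can capture at most the negative part of $f$, since the complement of a disk in $\T^2$ is not a disk), and that, because $\sigma$ is \emph{oscillating}, the infimum $m(k):=\inf_{\Lambda_0\times(0,\infty)}\mathcal{S}_k$ is strictly negative for all small $k$: taking $\gamma=\partial U$ for $U$ a small disk around a point $q_-$ with $f<0$ and optimising over $T$ gives $\mathcal{S}_k\le\sqrt{2k}\,\ell_g(\partial U)+\int_U\sigma<0$ once $k$ is small. I would then define
\[
\tau_+(g,\sigma):=\sup\{k>0:m(k)<0\}=\tfrac12\,\sup\Big\{\Big(\tfrac{A(\gamma)}{\ell_g(\gamma)}\Big)^2:\gamma\in\Lambda_0\ \text{non-constant},\ A(\gamma)<0\Big\},
\]
which is positive by the above and finite because a disk of $\sigma$-area bounded away from zero cannot have arbitrarily short boundary on $\T^2$. (In the exact case this recovers the Mañé critical value $c_u(L_\theta)$.)

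The core analytic step is to show that for almost every $k\in(0,\tau_+(g,\sigma))$ the infimum $m(k)$ is attained by a non-constant closed magnetic geodesic $\gamma_k$ of energy $k$, which is then a global, hence local, minimum of $\mathcal{S}_k$. Here I would run Struwe's monotonicity trick: $k\mapsto m(k)$ is non-decreasing, hence differentiable almost everywhere, and at a differentiability point $k_0$ the identity $\mathcal{S}_k(\gamma_n,T_n)=\mathcal{S}_{k_0}(\gamma_n,T_n)-(k_0-k)T_n\ge m(k)$, applied to a minimising sequence and then letting $k\to k_0^-$, forces $\limsup_n T_n\le m'(k_0)<\infty$; moreover $T_n$ is bounded away from $0$ because $m(k_0)<0$ while shrinking loops have action near $0$, so the bound on $\tfrac{1}{2T_n}\|\dot\gamma_n\|_{L^2}^2$ yields $W^{1,2}$-compactness. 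Weak lower semicontinuity of the Dirichlet term together with the $C^0$-continuity of $A$ shows that $m(k_0)$ is attained; the Euler--Lagrange equation is the magnetic geodesic equation (the first variation of $A$ is the local term $\int_0^1\sigma(\xi,\dot\gamma)\,dt$), so bootstrapping gives a smooth closed magnetic geodesic, which is non-constant since $m(k_0)<0$ lies strictly below the action of every constant loop.

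Finally, fix $k$ in the full-measure set of $(0,\tau_+(g,\sigma))$ for which $\gamma_k$ exists. If $E^{-1}(k)$ carries infinitely many geometrically distinct closed magnetic geodesics we are done; otherwise their set is finite, so $\gamma_k$ is isolated, and a non-constant isolated local minimum of the free-period action functional gives rise, via the Bangert--Hingston type argument (producing "mountain passes between shifted iterates'' at action levels accumulating at $n\,\mathcal{S}_k(\gamma_k)$ as $n\to\infty$), to infinitely many geometrically distinct closed magnetic geodesics of the same energy $k$ --- all contractible, hence found inside $\Lambda_0$ --- contradicting finiteness. The main obstacle is the analytic step in the non-exact setting: one must verify that $\mathcal{S}_k$ is well defined, lower semicontinuous and of class $C^{1,1}$ on $\Lambda_0$ despite $\sigma$ not being exact, and that Struwe's compactness survives the nonlocal term $A(\gamma)$; the Bangert--Hingston input can then be quoted essentially verbatim from the exact case, and the positivity and finiteness of $\tau_+(g,\sigma)$ is exactly where the oscillation hypothesis and the compactness of $\T^2$ are used.
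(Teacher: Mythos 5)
Your proposal restricts the whole variational set-up to the space $\Lambda_0$ of contractible loops and then runs a global minimization of the free-period action followed by a Bangert--Hingston iteration. There are two genuine, and related, gaps.

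First, restricting to contractible loops cannot work in general: as the paper points out, Ta\u\i manov has built oscillating forms on $\T^2$ for which the periodic orbits asserted by Theorem~\ref{theorem:main} cannot be contractible. Correspondingly, the closed orbit produced by the Ta\u\i manov existence theorem (Theorem~\ref{Taimanov92}) is the boundary of a minimizing \emph{film}, hence a multi-curve with no reason to be contractible. When it is non-contractible, its iterates live in pairwise distinct components $\mathcal N^n\neq\mathcal M_0$ of the loop space, where $\eta_k$ is \emph{not} exact, and the paper has to work with the closed $1$-form $\eta_k$ and the transgression class $[\tau]$ to set up the minimax families $\mathcal P_n(k)$ inside each $\mathcal N^n$. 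None of this is visible on $\Lambda_0$, and the contractible case is precisely the one the paper dispatches in a single line by reducing to \cite{AB15a}.

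Second, and independently, your core analytic step is false: the infimum $m(k)$ of $\mathcal S_k$ over $\Lambda_0\times(0,+\infty)$ is not a finite negative number for $k\in(0,\tau_+)$ --- it is $-\infty$. Optimizing in $T$ reduces $\mathcal S_k$ to $\gamma\mapsto\sqrt{2k}\,\ell_g(\gamma)+A(\gamma)$ on constant-speed loops, and both terms are \emph{additive under iteration}: $\ell_g(\gamma^N)=N\ell_g(\gamma)$, and $A(\gamma^N)=NA(\gamma)$ since one may cap $\gamma^N$ by composing a capping disk of $\gamma$ with the degree-$N$ branched cover $z\mapsto z^N$ of the disk. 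Hence as soon as a single contractible loop satisfies $\sqrt{2k}\,\ell_g(\gamma)+A(\gamma)<0$ (which, by your own formula, is exactly the condition $k<\tau_+$), iterating drives the action to $-\infty$. In particular $m(k)\in\{0,-\infty\}$ for every $k$, your lower bound $\mathcal S_k\geq-\int_{\T^2}(f)_-\,\mu_g$ is wrong (a capping disk may sweep the negative region of $f$ with arbitrary multiplicity, so the complement of a disk being ``not a disk'' is irrelevant), and Struwe's monotonicity applied to $m$ produces nothing. This is precisely why Ta\u\i manov minimizes the functional $\mathcal T_k$ over \emph{embedded} films, where iteration is unavailable and the functional is genuinely bounded below; the boundary of a minimizing film is only a \emph{local} minimizer of the loop action. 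The paper then takes such a local minimizer $\alpha_{k^*}$, builds monotone minimax functions $c_n$ out of paths between iterates $\alpha_{k^*}^n$, applies Struwe to the $c_n$ (not to a global minimum), and finishes with the elimination lemma together with Proposition~\ref{iterationofmountainpasses}. Your Bangert--Hingston step would play the role of that last ingredient, but it requires a local minimizer you have not actually produced, inside a loop space where the argument cannot in general be confined.
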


\begin{remark}
A generic 2-form $\sigma$ on $M$ is either oscillating or symplectic. The latter case has also been object of intensive research in relation with the existence of periodic orbits. Indeed, the following facts are known when $\sigma$ is symplectic. If $M\neq S^2$ there exist infinitely many closed magnetic geodesics on every low energy level \cite{FH03,GGM15}. If $M=S^2$ there are either two or infinitely many closed magnetic geodesics for every low energy level \cite{Ben14b}. Under some non-resonance conditions the second alternative holds for every low energy level \cite{Ben15b}. However, there are also examples of magnetic systems with a ``low'' energy level having exactly two closed magnetic geodesics \cite{Ben15}.
\end{remark}

\begin{remark}
In \cite[Theorem 6.1]{AM16} the existence of local minimizers has been extended to the case of magnetic Tonelli systems $(L,\sigma)$ for energies belonging to the interval $(e_0(L),e(L,\sigma))$, see \cite{AM16} for the precise definitions. This yields a generalization of Theorem \ref{theorem:main} to that setting, as explicitly stated in \cite[Theorem 6.2]{AM16}. Very little is known on periodic orbits for systems that are not Tonelli. A result asserting the existence of periodic orbits for almost every compact regular energy level of a general Hamiltonian system on $S^2$ was recently obtained in \cite{BZ15}.
\end{remark}

The proof of Theorem \ref{theorem:main} relies on a variational characterization of closed magnetic geodesics with energy $k$. They correspond to the zeros of a suitable closed 1-form $\eta_k$, called the \textit{action 1-form}, defined on the Hilbert manifold $\mathcal M$ of $H^1$-loops with arbitrary period. The main difference with respect to the case ``$\sigma$ exact'' in \cite{AMMP14}, or with the case ``$M$ has genus at least 2'' in \cite{AB15a}, is that, if $\sigma$ is a non-exact form on $\T^2$, then $\eta_k$ cannot be described as the differential of a functional on the subset of non-contractible loops. Work of Ta\u \i manov shows that we cannot circumvent this problem by simply restricting the study to the subset of contractible loops. Indeed, he recently constructed some concrete examples of oscillating magnetic forms on $\T^2$ for which the orbits given by Theorem \ref{theorem:main} cannot be contractible \cite{Tai15}.

The lack of a globally defined functional gives rise to two main difficulties. First, we cannot use the level sets of the functional to distinguish between different magnetic geodesics (this was an essential point for the arguments of \cite{AMMP14}). As we explain below, we solve this problem exploiting the fact that the fundamental group of $\T^2$ is torsion-free. Second, we need to find a class of sequences $(\gamma_h)_{h\in\N}\subset\mathcal M$, generalizing the classical Palais-Smale sequences, and for which a compactness theorem holds. This second problem is solved by making use of recent work of the authors. Indeed, in \cite[Theorem 2.1]{AB14} we showed that every vanishing sequence $(\gamma_h)$ (i.e. a sequence such that $|\eta_k(\gamma_h)|\rightarrow 0$) whose periods are bounded and bounded away from zero admits a converging subsequence. Since limiting points of vanishing sequences are zeros of $\eta_k$, the aforementioned compactness property provides the right tool to prove the existence of closed magnetic geodesics with energy $k$. 

The starting point of the proof of Theorem \ref{theorem:main} is the existence of a zero $\alpha_k$ of $\eta_k$ of a particular kind, namely a ``local minimizer'' for $\eta_k$ (see Section 
\ref{localminimizersonsurfaces} for a precise definition). The existence of such a zero of $\eta_k$ for every $k\in (0,\tau_+(g,\sigma))$ follows from \cite{Tai92b,Tai92a,Tai93} or from \cite[Appendix C]{CMP04}. Without loss of generality we might suppose that $\alpha_k$ is 
non-contractible, as otherwise the proof contained in \cite{AB15a} would go through without any change. Observe that, if $\alpha_k$ is non-contractible, then also its iterates $\alpha_k^n$ are 
non-contractible and are contained in distinct connected components, say $\mathcal N^n$, of the space $\mathcal M$. We use the $\alpha_k^n$'s
to define suitable minimax classes in each $\mathcal N^n$ and corresponding monotonically increasing minimax functions $c_n(k)$. The monotonicity 
will be an essential ingredient to show that the minimax functions actually yield zeros $\gamma_n(k)$ of $\eta_k$ for almost every $k\in (0,\tau_+(g,\sigma))$. This is the content of the \textit{Struwe monotonicity argument} \cite{Str90}\footnote{See \cite{AMMP14,Con06,Mer10} for other applications of this argument to the existence of closed orbits and \cite{Ass15a} for an application to the existence of orbits satisfying the conormal boundary conditions.}, which we generalize to our setting. The fact that the sets $\mathcal N^n$ are all distinct combined with the fact that isolated zeros for $\eta_k$ cease to be of mountain-pass type if iterated sufficiently many times (cf. \cite[Theorem 2.6]{AMMP14} or Proposition \ref{iterationofmountainpasses}) will show that the $\gamma_n(k)$'s can not be iterates of finitely many zeros of $\eta_k$, thus concluding the proof.

\noindent We end this introduction with a summary of the contents of the present work:

\begin{itemize}
\item In Section \ref{theaction1form} we introduce the action form $\eta_k$ and recall its global properties.
\item In Section \ref{localzero} we analyze the behavior of $\eta_k$ locally around a zero.
\item In Section \ref{localminimizersonsurfaces} we recall the existence, for every sufficiently low energy, of closed magnetic geodesics which are local minimizers of the action.
\item In Section \ref{theminimaxclasses} we define the minimax classes and the minimax functions.
\item In Section \ref{astruwetypemonotonicityargument} we prove the mountain pass lemma.
\item In Section \ref{elimination} we conclude the proof of the main theorem.
\end{itemize}

\subsection*{Acknowledgements}
Luca Asselle is partially supported by the DFG grant AB 360/2-1 ``Periodic orbits of conservative systems below the Ma\~n\'e critical energy value''. Gabriele Benedetti is partially supported by the DFG grant SFB 878. We are indebted to Marco Mazzucchelli for the notion of essential family. We would like to thank the anonymous referee for her careful reading of the draft and for her suggestions which helped us improving the paper.


\section{The action 1-form}
\label{theaction1form}

We start by defining the 1-form $\eta_k$ and recalling some general facts about it. For the proofs of the statements in this section we refer to \cite[Section 2]{AB14}. Let $(M,g)$ be a closed connected Riemannian manifold and let $\sigma$ be a (closed) 2-form on $M$. We will write $|\cdot|$ for the norm on $TM$ induced by $g$. Let us denote by $\mathcal M:=H^1(\T,M)\times (0,+\infty)$ the space of $H^1$-loops in $M$ with arbitrary period. We write $\mathcal M_0$ for the component of contractible loops. The space $\mathcal M$ has the structure of a Hilbert manifold endowed with a metric given by $g_{\mathcal M}=g_{H^1}+dT^2$, 
where $g_{H^1}$ is the usual Riemannian metric on $H^1(\T,M)$ induced by the metric $g$. With slight abuse of notation we will also denote the norm on $\mathcal M$ induced by $g_{\mathcal M}$ with $|\cdot |$.

Throughout this paper we will adopt the identification $\gamma=(x,T)$ where $\gamma:\R\rightarrow M$ is such that $\gamma(t)=x(t/T)$.
We have a $\T$- and an $\N$-action on $\mathcal M$, where by $\N$ we denote the set of positive integers. The former action changes the base point of the loop:
\begin{equation*}
\psi\cdot \gamma:=\big(x(\psi+\,\cdot\,),T\big),\quad\quad \forall\, \psi\in\T,\ \forall\,\gamma=(x,T)\in\mathcal M.
\end{equation*}
The latter action iterates the loop:
\begin{equation*}
\gamma^n:= (x^n,nT),\quad\quad \forall\, n\in\N,\ \forall\, \gamma=(x,T)\in\mathcal M,
\end{equation*}
where $x^n(s):= x(ns)$, $\forall\, s\in\T$.

Let us now define for every $k\in(0,+\infty)$ the \textit{action 1-form} $\eta_k\in \Omega^1(\mathcal M)$ by
\[\eta_k(x,T):= d\A_k(x,T) + \int_0^1 \sigma_{x(s)}\big(\cdot,x'(s)\big)\, ds,\]
where $\A_k: \mathcal M\rightarrow \R$ is given by
\begin{equation*}
\A_k(x,T):= T\cdot \int_0^1\Big(\frac{1}{2T^2}|x'(s)|^2 +k\Big)\,ds = \frac{e(x)}{T}+ kT
\end{equation*}
and $e(x)$ is the kinetic energy of $x$
\begin{equation*}
e(x):= \frac12 \int_0^1 |x'(s)|^2 \, ds.
\end{equation*}

The action 1-form is smooth and closed. It is, furthermore, $\T$-invariant. In particular, the set of zeros of $\eta_k$ is the disjoint union of sets of the type $\T\cdot\gamma$. We call every such a set a \textit{vanishing circle}. Moreover, an element $\gamma\in\mathcal M$ is a zero of $\eta_k$ if and only if $\gamma$ is a closed magnetic geodesic with energy $k$. 
In view of this, we will find closed magnetic geodesics with energy $k$ by constructing zeros of $\eta_k$. We will achieve this goal using an approximation procedure.

\begin{definition}
We call $(\gamma_h)=(x_h,T_h)\subset\mathcal M$ a \emph{vanishing sequence} for $\eta_k$, if 
\[|\eta_k(\gamma_h)| \rightarrow 0 .\]
\end{definition}

By continuity $\eta_k$ vanishes on the set of limit points of vanishing sequences. So we are led to ask: which vanishing sequences do have a non-empty  limit set? Clearly, if $T_h\rightarrow 0$ or $T_h\rightarrow\infty$, then the limit set is empty. The following theorem shows that the converse is also true.

\begin{theorem}\label{theorem:ps}
Let $(\gamma_h)=(x_h,T_h)$ be a vanishing sequence for $\eta_k$ in a given connected component of $\mathcal M$ with $T_h\leq T^*<\infty$ for every $h\in \N$. Then the following statements hold:
\begin{enumerate}
\item If $T_h$ tends to zero, then $\ e(x_h)\rightarrow 0$.
\item If $0<T_*\leq T_h$, $\forall \, h\in \N$, then $(\gamma_h)$ has a converging subsequence. 
\end{enumerate}
\end{theorem}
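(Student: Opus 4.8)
The plan is to exploit the explicit structure of $\eta_k$ together with a priori bounds coming from the hypothesis $T_h \le T^*$ and the smallness of $|\eta_k(\gamma_h)|$. First I would test the covector $\eta_k(\gamma_h)$ against the tangent vector $(0, \partial_T) \in T_{\gamma_h}\mathcal M$, i.e. differentiate in the period variable. Since the magnetic term $\int_0^1 \sigma_{x(s)}(\,\cdot\,, x'(s))\,ds$ annihilates $\partial_T$ (it has no $dT$-component), this picks out only $\partial_T \A_k(x_h,T_h) = -e(x_h)/T_h^2 + k$. Hence $|{-e(x_h)/T_h^2 + k}| \le |\eta_k(\gamma_h)| \to 0$, which gives $e(x_h) = k T_h^2 + o(1)$. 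Statement (1) is then immediate: if $T_h \to 0$ then $e(x_h) \to 0$. Moreover, in the situation of (2) this already yields a uniform bound $e(x_h) \le k(T^*)^2 + 1 =: C$ for $h$ large, so the loops $x_h$ are bounded in $H^1(\T,M)$.

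For statement (2), the remaining content is that a vanishing sequence with periods bounded and bounded away from zero is precompact in $\mathcal M$. Since $M$ is compact and $(x_h)$ is $H^1$-bounded, by Rellich it has a subsequence converging weakly in $H^1$ and strongly in $C^0$ to some $x_\infty$, and $T_h \to T_\infty \in [T_*, T^*]$ after passing to a further subsequence; the candidate limit is $\gamma_\infty = (x_\infty, T_\infty) \in \mathcal M$ (here $T_\infty > 0$ is exactly where the lower bound $T_*$ is used, so that $\gamma_\infty$ is an honest element of $\mathcal M$ and not a ``constant at zero period''). Upgrading weak to strong $H^1$ convergence is the crux. I would argue as in \cite[Section 2]{AB14}: write the defining equation obtained by testing $\eta_k(\gamma_h)$ against an arbitrary $H^1$ vector field $\xi$ along $x_h$; the Euler--Lagrange operator is, after dividing by $T_h$, of the form $-\tfrac{1}{T_h^2}\nabla_s x_h' + (\text{zeroth-order magnetic term depending on } x_h, x_h')$, paired against $\xi$. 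The magnetic term $\sigma_{x_h(s)}(\xi, x_h')$ is linear in $x_h'$ with coefficients bounded in $C^0$ (as $M$ is compact and $\sigma$ smooth), hence bounded in $L^2$ by the $H^1$-bound, and it converges strongly in $L^2$ along the subsequence because $x_h \to x_\infty$ in $C^0$ and $x_h' \rightharpoonup x_\infty'$ in $L^2$ together with an interpolation/compactness argument. Feeding the near-vanishing of $\eta_k(\gamma_h)$ into this identity shows $\nabla_s x_h'$ is bounded in $L^2$, so $x_h$ is bounded in $H^2$, and then Rellich applied to $H^2 \hookrightarrow H^1$ gives strong $H^1$ convergence of a subsequence. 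Convergence of $T_h$ is automatic, so $\gamma_h \to \gamma_\infty$ in $\mathcal M$.

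The main obstacle is the elliptic bootstrap in step two: one must handle the quasilinear $\tfrac{1}{T_h^2}$ prefactor (harmless since $T_h$ stays in a compact subset of $(0,\infty)$) and, more delicately, justify that the zeroth-order magnetic nonlinearity does not destroy the gain of regularity. The key point that makes this work — and the reason the result is true on $\T^2$ with a non-exact $\sigma$, where no global primitive exists — is that $\sigma$ enters the equation only through its pointwise values along the loop, so all the estimates are purely local on $M$ and the absence of a global functional is irrelevant. Since this is precisely \cite[Theorem 2.1]{AB14}, I would either cite it directly or reproduce the short bootstrap; the only genuinely new observation needed here is the period-derivative computation giving (1) and the uniform energy bound that launches (2).
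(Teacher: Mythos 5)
Your treatment of part (1) is correct: since the magnetic term in $\eta_k$ has no $dT$-component, testing against the unit vector $\partial_T$ isolates $-e(x_h)/T_h^2+k$, and the vanishing of $|\eta_k(\gamma_h)|$ forces $e(x_h)/T_h^2\to k$, which gives both claim (1) and the uniform $H^1$-bound launching claim (2).

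The bootstrap you propose for part (2) has a genuine gap. The condition $|\eta_k(\gamma_h)|\to 0$ is a smallness statement in the \emph{dual} of $H^1$ (the norm $|\cdot|$ on covectors is the one induced by $g_{\mathcal M}=g_{H^1}+dT^2$), not in $L^2$. Subtracting the magnetic part, which is merely bounded in $L^2$, from something tending to zero in $H^{-1}$ therefore gives control of $\nabla_s x_h'$ only in $H^{-1}$, which you already had for free from the $L^2$-bound on $x_h'$. No $L^2$-bound on $\nabla_s x_h'$ follows, and in fact vanishing sequences need \emph{not} be bounded in $H^2$: already for $M=\T$, $\sigma=0$, the loops $x_h(s)=ns+h^{-3/2}\sin(2\pi hs)$ with constant period $T=n/\sqrt{2k}$ satisfy $|\eta_k(\gamma_h)|\to 0$ while $\|x_h''\|_{L^2}\sim\sqrt h\to\infty$, even though $x_h\to ns$ strongly in $H^1$. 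So the conclusion holds but your route to it does not. (A secondary issue: the magnetic term $\sigma_{x_h}(\cdot,x_h')$ converges only weakly, not strongly, in $L^2$ under $C^0$-convergence of $x_h$ and weak $L^2$-convergence of $x_h'$.) The argument actually used in \cite[Section 2]{AB14} avoids higher regularity entirely: in local coordinates the $H^1$-gradient of the kinetic part equals $\tfrac1{T_h}\bigl(\mathrm{Id}-(1-\nabla_s^2)^{-1}\bigr)$ applied to $x_h$, and $(1-\nabla_s^2)^{-1}$ is compact from $H^1$ to $H^1$ and from $L^2$ to $H^1$; since the magnetic contribution is bounded in $L^2$ and $T_h$ stays in a compact subset of $(0,\infty)$, one isolates $x_h$ as (something tending to $0$ in $H^1$) plus (the image of a bounded set under a compact operator), which yields a strongly convergent subsequence directly without ever touching $H^2$.
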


We are going to find vanishing sequences by considering certain minimax classes of paths in $\mathcal M$. For our argument we need a vector field $X_k$ generalizing the negative gradient of the free-period action functional. It is defined by
\begin{equation}\label{eq:xk}
X_k:= \frac{-\,\sharp\eta_k}{\sqrt{1+|\eta_k|^2}},
\end{equation}
where $\sharp$ is the duality between $T\mathcal M$ and $T^*\mathcal M$ induced by the Riemannian metric $g_{\mathcal M}$. Let $\Phi^{k}$ be the positive semi-flow of $X_k$. It is known that the flow lines of $\Phi^k$ that blow up in finite time go closer and closer to the subset of constant loops. Hence, the restriction of the semi-flow $\Phi^k$ to any connected component $\mathcal N\neq\mathcal M_0$ of $\mathcal M$ is \textit{complete}, namely all its trajectories are defined for all positive times. Moreover, by the definition of $X_k$ we have the following consequence of Theorem \ref{theorem:ps}.
\begin{corollary}\label{cor:ps}
Let $\mathcal N\neq\mathcal M_0$ be a connected component of $\mathcal M$, $T^*$ a positive real number, and $\mathcal Y'\subset \mathcal N$ a neighborhood of the zeros of $\eta_k$ that are contained in the set $\mathcal N\cap\{T\leq T^*\}$. There exists $\varepsilon=\varepsilon(T^*,\mathcal Y')>0$ such that
\begin{equation}
\big ( \mathcal N\cap \{T\leq T^*\}\big )\setminus \mathcal Y'\ \subset\ \{|\eta_k(X_k)|\geq\varepsilon\}.
\end{equation}
\end{corollary}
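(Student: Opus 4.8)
The plan is to deduce Corollary \ref{cor:ps} from Theorem \ref{theorem:ps}(2) by a straightforward contradiction-and-compactness argument. First I would unwind the definition of $X_k$ in \eqref{eq:xk}: since $\sharp$ is an isometry between $T^*\mathcal M$ and $T\mathcal M$, we have $\eta_k(X_k) = -|\eta_k|^2/\sqrt{1+|\eta_k|^2}$, hence $|\eta_k(X_k)| = |\eta_k|^2/\sqrt{1+|\eta_k|^2}$. This is a continuous, strictly increasing function of $|\eta_k|$ that vanishes exactly where $\eta_k$ does, so the statement $\{|\eta_k(X_k)| \geq \varepsilon\}$ is equivalent to $\{|\eta_k| \geq \delta\}$ for a corresponding $\delta = \delta(\varepsilon) > 0$. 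Thus it suffices to produce $\delta > 0$ with $(\mathcal N \cap \{T \leq T^*\}) \setminus \mathcal Y' \subset \{|\eta_k| \geq \delta\}$, and then set $\varepsilon := \delta^2/\sqrt{1+\delta^2}$.

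Next I would argue by contradiction: suppose no such $\delta$ exists. Then for every $h \in \N$ there is a point $\gamma_h = (x_h, T_h) \in (\mathcal N \cap \{T \leq T^*\}) \setminus \mathcal Y'$ with $|\eta_k(\gamma_h)| < 1/h$. By construction $(\gamma_h)$ is a vanishing sequence for $\eta_k$ lying in the single connected component $\mathcal N$, with $T_h \leq T^*$ for all $h$. I would then split according to whether the periods stay bounded away from zero. Since $\mathcal N \neq \mathcal M_0$, the loops in $\mathcal N$ are non-contractible; combined with part (1) of Theorem \ref{theorem:ps} this forces the periods to stay bounded below. Concretely, if $T_h \to 0$ along a subsequence, then part (1) gives $e(x_h) \to 0$, so $x_h$ converges (after passing to a further subsequence) to a constant loop, which is contractible — contradicting $\gamma_h \in \mathcal N \neq \mathcal M_0$, since connected components are both open and closed. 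Hence there is $T_* > 0$ with $T_h \geq T_*$ for all large $h$ (after discarding finitely many terms).

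With $0 < T_* \leq T_h \leq T^*$ in hand, part (2) of Theorem \ref{theorem:ps} applies and yields a subsequence $\gamma_{h_j} \to \gamma_\infty$ in $\mathcal M$. By continuity of $\eta_k$ we get $\eta_k(\gamma_\infty) = 0$, so $\gamma_\infty$ is a zero of $\eta_k$; moreover $\gamma_\infty \in \mathcal N$ (components are closed) and $T_\infty \leq T^*$, so $\gamma_\infty$ is one of the zeros contained in $\mathcal N \cap \{T \leq T^*\}$, hence $\gamma_\infty \in \mathcal Y'$. But $\mathcal Y'$ is open and $\gamma_{h_j} \to \gamma_\infty$, so $\gamma_{h_j} \in \mathcal Y'$ for $j$ large, contradicting $\gamma_{h_j} \notin \mathcal Y'$. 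This contradiction establishes the existence of $\delta$, and therefore of $\varepsilon$.

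The only genuinely substantive input is Theorem \ref{theorem:ps}, which is quoted from \cite{AB14}; everything else is bookkeeping. The one point requiring a little care — the main (minor) obstacle — is the dependence of $\varepsilon$ on the data: the argument produces $\delta$ depending only on $T^*$ and $\mathcal Y'$ (through which zeros are "captured" and through the openness used at the end), and the translation $\varepsilon = \delta^2/\sqrt{1+\delta^2}$ preserves this, so indeed $\varepsilon = \varepsilon(T^*, \mathcal Y')$ as claimed. One should also note that the hypothesis $\mathcal N \neq \mathcal M_0$ is used only to rule out the period-collapse case; on $\mathcal M_0$ the conclusion could fail because a vanishing sequence could escape to constant loops.
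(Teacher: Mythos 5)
Your proof is correct and matches the paper's intended approach: the paper states Corollary \ref{cor:ps} as a ``consequence of Theorem \ref{theorem:ps}'' without spelling out the argument, and the contradiction-plus-compactness proof you give, together with the translation from $|\eta_k|$ to $|\eta_k(X_k)|$ via the explicit formula $|\eta_k(X_k)|=|\eta_k|^2/\sqrt{1+|\eta_k|^2}$, is exactly what is meant. One small simplification: in the period-collapse case you do not need to extract a convergent subsequence to a constant loop; once $e(x_h)\to 0$ the lengths of the loops $x_h$ tend to zero, so for large $h$ the loop lies in a geodesic ball and is therefore contractible, directly contradicting $\gamma_h\in\mathcal N\neq\mathcal M_0$.
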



\section{Local properties of the action 1-form on surfaces}
\label{localzero}

In this section we analyze some local properties of the action 1-form $\eta_k$ under the assumption that $M$ is a closed connected orientable surface.  

If $\gamma$ is a zero of $\eta_k$ we now construct a neighborhood of $\T\cdot\gamma^\N$ where the action form admits a well-behaved primitive. Since $\gamma$ is a smooth curve, $\gamma(\R)\subseteq M$ is a set of zero Lebesgue measure. In particular, there exists $q\in M\setminus \gamma(\R)$. We set $V^\gamma:=M\setminus \{q\}$ and observe that $\sigma|_{V^\gamma}$ is exact.
Let $\theta^\gamma\in\Omega^1(V^\gamma)$ be a primitive of $\sigma$ on $V^\gamma$. We denote by $\mathcal V^\gamma$ the open subset of $\mathcal M$ made by the loops with image entirely contained in $V^\gamma$ and observe that $\mathcal V^\gamma$ is an open neighborhood of the set $\T \cdot\gamma^\N$. Furthermore, $\eta_k$ is exact on $\mathcal V^\gamma$ \textit{for every} $k\in(0,+\infty)$ with primitive ${\mathbb S}^\gamma_k:\mathcal V^\gamma\rightarrow\R$ given by the formula
\begin{equation*}
{\mathbb S}^\gamma_k(x,T):=T\cdot \int_0^1\Big[L_{\theta^\gamma}\Big (x(s),\frac{x'(s)}{T}\Big )+k\Big]\,ds,\quad  L_{\theta^\gamma}(q,v):= \frac12 |v|_q^2 + \theta^\gamma_q(v).
\end{equation*}
Namely, ${\mathbb S}^\gamma_k$ is the free-period action functional associated with the Lagrangian $L_{\theta^\gamma}$. Since ${\mathbb S}^\gamma_k$ belongs to the class of functionals considered in \cite{AMMP14}, we can translate Theorem 2.6 contained therein to our setting. Notice indeed that the base manifold does not have to be compact for that result to hold. 

\begin{proposition}
Let $k>0$ and let $\gamma\in\mathcal M$ be such that for every $\nu\in \N$, $\T\cdot \gamma^\nu$ is an isolated vanishing circle. Let ${\mathbb S}_k^\gamma:\mathcal V^\gamma\rightarrow \R$ be the local primitive of $\eta_k$ defined above. There exists $\nu(\gamma)\in\N$ such that for all $\nu> \nu(\gamma)$ the following holds: There exists a fundamental system of open neighborhoods $\mathcal W_\nu\subseteq \mathcal V^\gamma$ of $\T \cdot \gamma^\nu$ such that, if $\beta_0, \beta_1 \in \{{\mathbb S}_k^\gamma<{\mathbb S}_k^\gamma(\gamma^\nu)\}\cap\mathcal W_\nu$ are contained in the same connected component of $\mathcal W_\nu$ then $\beta_0$ and $\beta_1$ are contained in the same connected component of $\{{\mathbb S}_k^\gamma<{\mathbb S}_k^\gamma(\gamma^\nu)\}$.
\label{iterationofmountainpasses}
\end{proposition}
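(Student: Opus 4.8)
The plan is to derive the statement from \cite[Theorem~2.6]{AMMP14}, applied to the Tonelli (electromagnetic) Lagrangian $L_{\theta^\gamma}$ on the open surface $V^\gamma$. Recall that ${\mathbb S}_k^\gamma$ is precisely the free-period action functional of $L_{\theta^\gamma}$ at energy $k$ and that $d{\mathbb S}_k^\gamma=\eta_k$ on $\mathcal V^\gamma$; hence the critical circles of ${\mathbb S}_k^\gamma$ lying in $\mathcal V^\gamma$ are exactly the vanishing circles of $\eta_k$ contained there, and by hypothesis $\T\cdot\gamma^\nu$ is one such isolated critical circle for every $\nu\in\N$. Theorem~2.6 of \cite{AMMP14} establishes exactly the ``no local branching of sublevels'' property asserted here for the iterates of an isolated critical circle of the free-period action functional of a Tonelli Lagrangian, with the threshold $\nu(\gamma)$ determined by the iteration behaviour of the Morse index and nullity of $\T\cdot\gamma$. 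Consequently, the only point left to verify is that the standing assumption ``the base manifold is closed'' in \cite{AMMP14} may be dropped; the task is then to check that the proof of \cite[Theorem~2.6]{AMMP14} transplants to the non-compact surface $V^\gamma$.

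To this end I would first localize near the orbit. Since $\gamma(\R)$ is compact and disjoint from the deleted point of $M$, there is a compact subsurface $K\subset V^\gamma$ with $\gamma(\R)$ in its interior; the loops with image contained in the interior of $K$ form an open neighborhood $\mathcal U\subseteq\mathcal V^\gamma$ of $\T\cdot\gamma^\N$, and all constructions below involve only loops with image in $K$. The neighborhoods $\mathcal W_\nu$ will be chosen connected and inside $\mathcal U$, so that the hypothesis ``$\beta_0,\beta_1$ lie in the same component of $\mathcal W_\nu$'' is automatic. Each step of the proof of \cite[Theorem~2.6]{AMMP14} --- the construction of a Gromoll--Meyer pair adapted to the $\T$-symmetry around $\T\cdot\gamma^\nu$, the normal form and the iteration analysis for the Hessian of ${\mathbb S}_k^\gamma$ along the orbit, and the localized negative pseudo-gradient deformations --- then takes place within $\mathcal U$ and is insensitive to $V^\gamma\setminus K$.

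The one ingredient of \cite{AMMP14} that is not purely local, namely the Palais--Smale condition, is available in the localized form we need. Indeed, a Palais--Smale sequence for ${\mathbb S}_k^\gamma$ contained in a small neighborhood of $\T\cdot\gamma^\nu$ has periods converging to $\nu T>0$ and stays bounded away from the constant loops; since $d{\mathbb S}_k^\gamma=\eta_k$, it is in particular a vanishing sequence for $\eta_k$ with periods bounded and bounded away from zero, so Theorem~\ref{theorem:ps}(2) provides a convergent subsequence (the localized statement of Corollary~\ref{cor:ps} may also be invoked). With this, the deformation lemmas of \cite{AMMP14} hold verbatim on $\mathcal U$, and \cite[Theorem~2.6]{AMMP14} yields the threshold $\nu(\gamma)$ together with the fundamental system $\mathcal W_\nu$.

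Finally, let me indicate where the substance of \cite[Theorem~2.6]{AMMP14} lies, since that is also the main obstacle to writing out a self-contained proof. The assertion is, in essence, that for $\nu$ large a small connected neighborhood of $\T\cdot\gamma^\nu$ meets the sublevel set $\{{\mathbb S}_k^\gamma<{\mathbb S}_k^\gamma(\gamma^\nu)\}$ in a set lying in a single connected component of the whole sublevel set --- equivalently, $\gamma^\nu$ ceases to separate the sublevel set locally. By Gromoll--Meyer theory this local sublevel set is, up to homotopy, the negative part of a Gromoll--Meyer pair for $\T\cdot\gamma^\nu$, so the number of its components is controlled by the Morse index of $\gamma^\nu$; the iteration inequalities for the index and nullity of the iterates of a critical circle then force this local sublevel set to be connected once $\nu$ exceeds a threshold depending on the mean index of $\gamma$ and the uniformly bounded nullities of its iterates, while the degenerate cases of small index are disposed of by the finer local analysis in \cite{AMMP14}. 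The obstacle is thus not a new idea but the careful bookkeeping needed to see that this delicate local Morse-theoretic machinery --- in particular the $\T$-equivariant Gromoll--Meyer construction and the gradient-flow estimates near the orbit --- runs word for word over a non-compact base; the reduction to the compact subsurface $K$ makes this routine, so no genuinely new difficulty arises.
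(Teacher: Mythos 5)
Your proposal is correct and follows essentially the same route as the paper, which derives the proposition directly from \cite[Theorem~2.6]{AMMP14} after observing that the compactness assumption on the base manifold is not needed. The extra detail you supply — localizing to a compact subsurface $K\subset V^\gamma$ and recovering the local Palais–Smale condition from Theorem~\ref{theorem:ps}(2) — is exactly the justification the paper leaves implicit in its remark that ``the base manifold does not have to be compact for that result to hold.''
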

\begin{remark}
Given $\beta_0$ and $\beta_1$ as above, we stress that the path connecting them inside $\{{\mathbb S}_k^\gamma<{\mathbb S}_k^\gamma(\gamma^\nu)\}$ might not be contained in $\mathcal W_\nu$.
\end{remark}
We now move to consider zeros of $\eta_k$ of a particular type.

\begin{definition}
We say that $\alpha\in \mathcal M$ is a \emph{local minimizer of the action} (with energy $k$) if there exists an open neighborhood $\mathcal U^\alpha\subseteq \mathcal V^\alpha$ of $\T\cdot \alpha$ such that 
\begin{equation}\label{eq:locmin}
{\mathbb S}_k^\alpha(\gamma) \geq {\mathbb S}_k^\alpha(\alpha) , \quad \forall\,\gamma \in \mathcal U^\alpha .
\end{equation}
The local minimizer $\alpha$ is called \emph{strict}, if inequality \eqref{eq:locmin} is strict $\forall\, \gamma\in \mathcal U^\alpha\setminus \T\cdot\alpha$.
\label{localminimizers/strict}
\end{definition}
The next proposition, which follows from \cite[Lemma 3.1]{AMP13}, states that the property of being a local minimizer is preserved under iterations.
\begin{proposition}
If $\alpha$ is a (strict) local minimizer of the action, then for every $n\geq 1$ the $n$-th iterate $\alpha^n$ is also a (strict) local minimizer of the action.
\label{persistenceoflocalminimizers}
\end{proposition}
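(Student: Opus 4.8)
The plan is to reduce the statement about the free-period action functional ${\mathbb S}_k^\alpha$ to a statement about the \emph{fixed-period} action functional on the loop space, where the corresponding result is classical. First I would recall that a strict local minimizer $\alpha = (x,T)$ of ${\mathbb S}_k^\alpha$ is in particular a critical point of the fixed-period action functional $S_{k,T}^\alpha$ on $H^1(\T,V^\alpha)$ obtained by freezing the period to $T$; moreover, since $\alpha$ is a local minimizer of the free-period functional and the free-period functional restricted to the fibre $T=\mathrm{const}$ is exactly the fixed-period functional, $x$ is a local minimizer of $S_{k,T}^\alpha$ as well. The key observation is that the period variable plays a harmless role: writing $\gamma=(y,\tau)$ near $\alpha^n$, one has ${\mathbb S}_k^\alpha(y,\tau) = \tfrac{e(y)}{\tau} + k\tau + \int_0^1 \theta^\alpha_{y(s)}(y'(s))\,ds$, and for fixed $y$ the function $\tau \mapsto \tfrac{e(y)}{\tau}+k\tau$ has a strict minimum at $\tau = \sqrt{e(y)/k}$; so the period direction only \emph{adds} a positive-definite contribution near the critical period and cannot destroy a minimum.

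The heart of the matter is therefore the fixed-period statement: if $x$ is a (strict) local minimizer of the fixed-period magnetic action $S_{k,T}^\alpha$ on $H^1(\T,V^\alpha)$, then its $n$-th iterate $x^n$ is a (strict) local minimizer of $S_{k,nT}^\alpha$ on $H^1(\T,V^\alpha)$. This is exactly \cite[Lemma 3.1]{AMP13}, applied to the Tonelli Lagrangian $L_{\theta^\alpha}+k$ on the (noncompact, but this is irrelevant) manifold $V^\alpha$: the argument there shows that any nearby loop $y$ of period $nT$ which is sufficiently $C^0$-close to $x^n$ can be cut into $n$ pieces, each close to $x$, whose actions sum to at least $n\, S_{k,T}^\alpha(x) = S_{k,nT}^\alpha(x^n)$, with equality only if each piece coincides with $x$ (in the strict case). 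I would invoke this lemma essentially verbatim, checking only that the local nature of the minimization allows us to stay inside $\mathcal V^\alpha = \mathcal V^{\alpha^n}$, which is automatic since $\mathcal V^\alpha$ depends only on the image of the curve and $\alpha$ and $\alpha^n$ have the same image.

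Putting the two ingredients together: given the neighbourhood $\mathcal U^{\alpha^n}$ produced for $x^n$ by the fixed-period lemma, I would take a product-type neighbourhood $\mathcal U^{\alpha^n} \times (nT-\delta, nT+\delta)$ inside $\mathcal V^\alpha$ and use the convexity estimate in the period variable above, together with the fixed-period minimization at each period level, to conclude ${\mathbb S}_k^\alpha(\gamma) \geq {\mathbb S}_k^\alpha(\alpha^n)$ for all $\gamma$ in this neighbourhood (strictly, off $\T\cdot\alpha^n$, in the strict case, using that equality in the period direction forces $\tau$ to the critical period and equality in the loop direction then forces $y$ to an iterate configuration). The main obstacle I anticipate is purely bookkeeping rather than conceptual: one must be careful that "$C^0$-close of period $nT$" genuinely decomposes into $n$ almost-closed pieces each admissible for the period-$T$ problem — the subtlety is that a nearby loop need not return exactly to its starting point after time $T$, so the cutting procedure of \cite{AMP13} requires small corrections (closing up each arc) whose action cost is controlled quadratically and absorbed; since this is precisely the content of \cite[Lemma 3.1]{AMP13}, I would simply cite it and restrict my exposition to the reduction from free period to fixed period.
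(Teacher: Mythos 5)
There is a genuine gap in your reduction from the free-period to the fixed-period problem. You claim that because $\tau\mapsto \tfrac{e(y)}{\tau}+k\tau$ is strictly convex with a minimum at $\tau^*(y)=\sqrt{e(y)/k}$, ``the period direction only adds a positive-definite contribution and cannot destroy a minimum,'' and then you invoke ``the fixed-period minimization at each period level.'' Neither step is correct. First, $x^n$ is a critical point (and a putative local minimizer) of the fixed-period functional $S^\alpha_{k,\tau}$ only at the single period $\tau=nT$, not at nearby period levels. Second, the critical period $\tau^*(y)$ moves with $y$, so the Hessian of $\mathbb S^\alpha_k$ has a nonzero cross term in $(y,\tau)$, and coordinate-wise minimality does not imply joint minimality. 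Concretely, minimizing $\mathbb S^\alpha_k(y,\tau)$ over $\tau$ gives
\[
\min_\tau \mathbb S^\alpha_k(y,\tau)=2\sqrt{e(y)k}+\int_0^1 \theta^\alpha_{y}(y')\,ds
= S^\alpha_{k,nT}(y)-\frac{1}{nT}\bigl(\sqrt{e(y)}-\sqrt{k}\,nT\bigr)^2,
\]
so the inequality $S^\alpha_{k,nT}(y)\geq S^\alpha_{k,nT}(x^n)$ only yields
\[
\mathbb S^\alpha_k(y,\tau)\geq \mathbb S^\alpha_k(\alpha^n)-\frac{1}{nT}\bigl(\sqrt{e(y)}-\sqrt{k}\,nT\bigr)^2,
\]
and the subtracted term is not controlled by fixed-period local minimality (the correction is second order in $y-x^n$, exactly the same order as the quadratic growth you would hope to exploit). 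Your own simpler caricature fails too: a function of the form $(y-\tau^*(x))^2+g(x)$ with $g$ minimized at $x_0$ need not have a local minimum at $(x_0,\tau^*(x_0))$ when $\tau^*$ depends nontrivially on $x$.

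Moreover, \cite[Lemma 3.1]{AMP13} is stated and proved directly for the free-period action functional, which is the framework used throughout that paper; it is not a fixed-period statement. The present paper's proof is precisely the one-line application of that lemma (after noting, as you correctly do, that compactness of the base is not needed and that $\mathcal V^\alpha=\mathcal V^{\alpha^n}$ since the two curves share the same image). If you wish to phrase the argument as a reduction, the correct target is the $\tau$-minimized magnetic length functional $F(y)=\sqrt{2k}\,\ell(y)+\int y^*\theta^\alpha$, for which local minimality is genuinely equivalent to free-period local minimality; but the iteration property for $F$ is again exactly what \cite[Lemma 3.1]{AMP13} (equivalently) establishes, so the detour gains nothing and, as written, your version of it is not sound.
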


The last proposition of this section gives more information on the neighborhood $\mathcal U^\alpha$, when the minimizer $\alpha$ is strict, see \cite[Lemma 4.3]{AMP13} for the proof.

\begin{proposition}\label{prp:minnei}
If $\alpha$ is a strict local minimizer of the action with energy $k$, there exists an open neighborhood $\mathcal U^\alpha$ of $\T\cdot \alpha$ such that
\begin{equation}
\inf_{\partial \mathcal U^\alpha}{\mathbb S}_k^\alpha > {\mathbb S}_k^\alpha(\alpha).
\label{inequalitystrictlocalminimizer}
\end{equation}
\label{strictlocalminimizer}
\end{proposition}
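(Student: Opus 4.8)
The statement to prove is Proposition~\ref{prp:minnei}: if $\alpha$ is a strict local minimizer of the action with energy $k$, then there exists an open neighborhood $\mathcal U^\alpha$ of $\T\cdot\alpha$ with $\inf_{\partial\mathcal U^\alpha}{\mathbb S}_k^\alpha>{\mathbb S}_k^\alpha(\alpha)$.

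The plan is to combine the defining property of a strict local minimizer with the Palais--Smale-type compactness supplied by Theorem~\ref{theorem:ps}. First I would start from the neighborhood $\mathcal U_0:=\mathcal U^\alpha\subseteq\mathcal V^\alpha$ provided by Definition~\ref{localminimizers/strict}, on which ${\mathbb S}_k^\alpha(\gamma)\geq{\mathbb S}_k^\alpha(\alpha)$ with equality only on $\T\cdot\alpha$. Fix a $\T$-invariant ``tube'' neighborhood whose closure is contained in $\mathcal U_0$ and is contained in a set $\{T_*\leq T\leq T^*\}$ (possible since $\T\cdot\alpha$ is compact, all its elements having the same period $T_\alpha>0$); call the tube $\mathcal U_1$. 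The candidate for $\mathcal U^\alpha$ will be a sublevel component of ${\mathbb S}_k^\alpha$ sitting inside $\mathcal U_1$: precisely, for $\delta>0$ small I would take $\mathcal U^\alpha$ to be the connected component of $\{{\mathbb S}_k^\alpha<{\mathbb S}_k^\alpha(\alpha)+\delta\}\cap\mathcal U_1$ containing $\T\cdot\alpha$, intersected with the interior of $\mathcal U_1$. The point is then to choose $\delta$ so that this component does not touch $\partial\mathcal U_1$, which forces $\partial\mathcal U^\alpha\subseteq\{{\mathbb S}_k^\alpha={\mathbb S}_k^\alpha(\alpha)+\delta\}$ and hence gives $\inf_{\partial\mathcal U^\alpha}{\mathbb S}_k^\alpha={\mathbb S}_k^\alpha(\alpha)+\delta>{\mathbb S}_k^\alpha(\alpha)$.

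To justify this choice of $\delta$ I would argue by contradiction: if no such $\delta$ works, there is a sequence $\delta_h\downarrow0$ and points $\gamma_h\in\partial\mathcal U_1$ that can be joined to $\T\cdot\alpha$ inside $\{{\mathbb S}_k^\alpha<{\mathbb S}_k^\alpha(\alpha)+\delta_h\}\cap\mathcal U_1$; in particular ${\mathbb S}_k^\alpha(\gamma_h)\to{\mathbb S}_k^\alpha(\alpha)$ while $\gamma_h$ stays on the fixed compact set $\partial\mathcal U_1$, bounded away from $\T\cdot\alpha$. One then applies the deformation/minimization argument: using the negative gradient flow of ${\mathbb S}_k^\alpha$ (equivalently the vector field $X_k$ from~\eqref{eq:xk}, which coincides with a reparametrized negative gradient of ${\mathbb S}_k^\alpha$ on $\mathcal V^\alpha$), flow $\gamma_h$ to decrease the action; since periods along the flow stay in a compact subinterval of $(0,\infty)$ (flow lines of $\Phi^k$ that blow up approach constant loops, which are excluded near $\alpha$), Theorem~\ref{theorem:ps}(2) yields a subsequence converging to a zero of $\eta_k$ with action $\leq{\mathbb S}_k^\alpha(\alpha)$ lying in $\overline{\mathcal U_1}\setminus\T\cdot\alpha$. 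This contradicts strictness of the local minimizer on $\mathcal U_0$. I would take $\mathcal U^\alpha$ to be the resulting component for a fixed admissible $\delta$, noting that it is open, $\T$-invariant, and contains $\T\cdot\alpha$.

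The main obstacle is the compactness/deformation step: one must make sure that in pushing $\gamma_h$ down in action the periods neither escape to $0$ nor to $\infty$, and that the flow does not exit $\mathcal V^\alpha$ (so that the primitive ${\mathbb S}_k^\alpha$ remains defined and decreases along the flow). The period control is exactly the content of Theorem~\ref{theorem:ps} together with the remark that blow-up of $\Phi^k$ forces loops towards the constant ones; staying inside $\mathcal V^\alpha$ can be arranged by working in a slightly larger but still relatively compact neighborhood $\mathcal U_1'\Subset\mathcal U_0$ and stopping the flow before it reaches $\partial\mathcal U_1'$, which is harmless because the action has already decreased. Everything else --- the existence of the tube $\mathcal U_1$, the identification $\partial\mathcal U^\alpha\subseteq\{{\mathbb S}_k^\alpha={\mathbb S}_k^\alpha(\alpha)+\delta\}$, and the final inequality --- is routine point-set and variational bookkeeping. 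In fact, since this is precisely \cite[Lemma 4.3]{AMP13} transported to the functional ${\mathbb S}_k^\alpha$ on the (possibly non-compact) base $V^\alpha$, I would simply invoke that argument, checking only that compactness of the base is not used there and that Theorem~\ref{theorem:ps} provides the required Palais--Smale condition on the relevant compact period range.
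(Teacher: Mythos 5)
The paper does not reprove this statement; it delegates to \cite[Lemma 4.3]{AMP13}, and your closing paragraph correctly identifies that and flags the two points one must check when transporting it (that compactness of the base manifold is not used, and that Theorem~\ref{theorem:ps} supplies the Palais--Smale property on the relevant period range). That high-level plan is the same as the paper's.

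Your reconstruction of the argument, however, has a genuine gap in the step meant to produce the contradiction. You flow the points $\gamma_h\in\partial\mathcal U_1$ with ${\mathbb S}^\alpha_k(\gamma_h)\to{\mathbb S}^\alpha_k(\alpha)$ downward via $X_k$, extract a vanishing sequence and invoke Theorem~\ref{theorem:ps} to get a limiting zero $\gamma_\infty$ of $\eta_k$ with ${\mathbb S}^\alpha_k(\gamma_\infty)\leq{\mathbb S}^\alpha_k(\alpha)$, and claim $\gamma_\infty\in\overline{\mathcal U_1}\setminus\T\cdot\alpha$. Nothing in this construction keeps the flowed points away from $\T\cdot\alpha$: the negative (pseudo-)gradient flow pushes points precisely towards the local minimum, so the trajectory of $\gamma_h$ may converge to $\T\cdot\alpha$, in which case $\gamma_\infty\in\T\cdot\alpha$ and there is no conflict with strictness. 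If instead the flow exits a smaller tube $\mathcal U_2\Subset\mathcal U_1$ before approaching $\T\cdot\alpha$, one only concludes $\inf_{\partial\mathcal U_2}{\mathbb S}^\alpha_k={\mathbb S}^\alpha_k(\alpha)$ and the problem recurs on $\partial\mathcal U_2$, an infinite regress. The way to close the argument is not deformation but minimization confined to a fixed annulus $A=\overline{\mathcal U_1}\setminus\mathcal U_2$: if $\inf_A{\mathbb S}^\alpha_k={\mathbb S}^\alpha_k(\alpha)$, apply Ekeland's variational principle on a slightly larger closed annulus to a minimizing sequence in $A$; this produces a vanishing sequence that remains in a set of uniformly bounded periods at fixed positive distance from $\T\cdot\alpha$, so its limit supplied by Theorem~\ref{theorem:ps}(2) is a zero of $\eta_k$ in $\mathcal V^\alpha\setminus\T\cdot\alpha$ with action ${\mathbb S}^\alpha_k(\alpha)$, contradicting strictness.

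Two smaller inaccuracies: $\partial\mathcal U_1$ is not a compact subset of the infinite-dimensional manifold $\mathcal M$, and there is no relatively compact neighborhood $\mathcal U_1'$ of $\T\cdot\alpha$ either; compactness here has to come from the Palais--Smale-type Theorem~\ref{theorem:ps} together with uniform period bounds, not from the ambient topology. You should rephrase those clauses (e.g., ``bounded set with periods confined to a compact subinterval of $(0,\infty)$'') to avoid giving the impression that sequential compactness is automatic.
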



\section{Local minimizers for the action 1-form on surfaces}
 
\label{localminimizersonsurfaces}

We now recall the existence of local minimizers when $(M,g)$ is an orientable closed connected Riemannian surface and $\sigma$ is a $2$-form on it. Up to changing the orientation of $M$, we can also assume that the integral of $\sigma$ over $M$ is non-negative.
Let $\mathcal F_+$ be the space of positively oriented  embedded surfaces in $M$ (in \cite{Tai92b,Tai92a,Tai93} Ta\u \i manov considers the so-called \textit{films}). We remark that the elements in $\mathcal F_+$ can have boundary or more than one connected component and that the empty surface $\varnothing$ also belongs to $\mathcal F_+$. If $k\in(0,+\infty)$ we consider the family of Ta\u\i manov functionals
\begin{equation}
\mathcal T_k:\mathcal F_+\rightarrow \R,\quad\quad\mathcal T_k(\Pi) := \sqrt{2k}\cdot l(\partial \Pi)+ \int_\Pi \sigma,
\label{Taimanovfunctional}
\end{equation}
where $l(\partial \Pi)$ denotes the length of the boundary of $\Pi$. We readily find that
\begin{equation}
\mathcal T_k (\varnothing) = 0 , \quad \mathcal T_k(M) = \int_{M} \sigma \geq 0.
\label{positivi}
\end{equation}
The family $k\mapsto\mathcal T_k$ is increasing and each $\mathcal T_k$ is bounded from below since
\[\mathcal T_k(\Pi)\geq -\Vert \sigma\Vert_\infty\cdot \operatorname{area}_g(M) .\]

We define now the value
\[\tau_+(M,g,\sigma):= \sup \big \{ k\ \big  |\ \inf \mathcal T_k< 0\big\} .\]
The functionals $\mathcal T_k$ can be lifted to any finite cover $p':M'\rightarrow M$, giving rise to the set of values $\tau_+(M',g,\sigma)$. We define the \textit{Ta\u\i manov critical value} as
\begin{equation}
\tau_+(g,\sigma):= \sup\Big\{ \tau_+(M',g,\sigma)\ \Big |\ p':M'\rightarrow M \mbox{ finite cover}\Big\} .
\end{equation}
In \cite{CMP04} it was shown that, when $\sigma=d\theta$ is exact, the Ta\u\i manov critical value coincides with $c_0(L_\theta)$, the Ma\~n\'e critical value of the abelian cover of the Lagrangian $L_\theta$ as in \eqref{lagrangianalocale}. 
To our knowledge there is no such a precise characterization for a general $\sigma$. However, a finite upper bound for $\tau_+(g,\sigma)$ in terms of suitable Ma\~n\'e critical values can still be found and $\tau_+(g,\sigma)$ turns out to be strictly positive if and only if $\sigma$ is oscillating (see \cite[Lemma 6.4]{AB15a} for a proof of the latter property). As far as the first assertion is concerned, let us consider any 2-form $\sigma'$ on $M$ such that $\sigma'\geq 0$ everywhere and the difference $\sigma-\sigma'$ is exact. If $\theta$ is any primitive of $\sigma-\sigma'$, we readily observe that
\begin{equation*}
\mathcal T_k(\Pi)=\sqrt{2k}\cdot l(\partial \Pi)+ \int_\Pi d\theta+ \int_\Pi \sigma' \geq \sqrt{2k}\cdot l(\partial \Pi)+ \int_\Pi d\theta.
\end{equation*}
From this inequality and from the characterization of $\tau_+(g,\sigma)$ in the exact case, it follows that $\tau_+(g,\sigma)\leq c_0(L_\theta)$. Thus, we conclude that $\tau_+(g,\sigma)$ is finite and, more precisely,
\begin{equation*}
\tau_+(g,\sigma)\leq\inf_{\sigma'}\inf_{d\theta=\sigma-\sigma'}c_0(L_\theta),
\end{equation*}
where $\sigma'$ is a 2-form on $M$ such that $\sigma'\geq 0$ and $\sigma-\sigma'$ is exact.

We can now state the main theorem about the existence of local minimizers of the action. This result is an easy corollary of Ta\u\i manov's theorem \cite{Tai92a,Tai93,CMP04} about the existence of global minimizers for $\mathcal T_k$ and Section 3 in \cite{AMP13}. For a detailed discussion when $M$ has genus at least $2$, we refer to \cite[Section 6]{AB15a}.

\begin{theorem}
Let $g$ be a Riemannian metric on a closed connected orientable surface $M$ and $\sigma \in \Omega^2(M)$ an oscillating form. For every $k\in(0,\tau_+(g,\sigma))$ there exists a closed magnetic geodesic $\alpha_k$ on $M$ with energy $k$ which is a local minimizer of the action. 
\label{Taimanov92}
\end{theorem}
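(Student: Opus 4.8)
The plan is to deduce the statement from Ta\u\i manov's existence-and-regularity theory for global minimizers of the functionals $\mathcal T_k$, combined with an isoperimetric comparison between $\mathcal T_k$ and the free-period action functional ${\mathbb S}_k$. \emph{Producing the magnetic geodesic.} Fix $k\in(0,\tau_+(g,\sigma))$. By definition of $\tau_+(g,\sigma)$ there is a finite cover $p':M'\to M$ with $k<\tau_+(M',g,\sigma)$, that is, $\inf\mathcal T_k<0$ for the functional lifted to $\mathcal F_+(M')$. Ta\u\i manov's theorem \cite{Tai92a,Tai93,CMP04} asserts that this infimum is attained by some $\Pi_k\in\mathcal F_+(M')$ which is a smooth embedded surface whose boundary is a finite disjoint union of smooth embedded closed curves; the first-variation equation for $\mathcal T_k$ forces each boundary component, parametrized with constant speed $\sqrt{2k}$ and carrying the boundary orientation, to satisfy $\kappa_g=-f/\sqrt{2k}$, hence — by the reformulation recalled in the introduction — to be a closed magnetic geodesic with energy $k$. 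Since $\inf\mathcal T_k<0$ while $\mathcal T_k(\varnothing)=0$ and $\mathcal T_k(M')=\int_{M'}\sigma\ge 0$, the minimizer $\Pi_k$ is neither $\varnothing$ nor $M'$, so $\partial\Pi_k\neq\varnothing$; fix one of its components $c$.

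\emph{The comparison.} Work in the open set $\mathcal V^c$ of $H^1$-loops in $M'$ avoiding a point $q\notin c(\R)$ and let $\theta^c$ be a primitive of the (exact) form $\sigma|_{M'\setminus\{q\}}$. For $\gamma=(y,S)\in\mathcal V^c$ the elementary estimate $\tfrac{e(y)}{S}+kS\ge\sqrt{2k}\sqrt{2e(y)}\ge\sqrt{2k}\,l(y)$ gives
\[
{\mathbb S}_k^{c}(\gamma)\ \ge\ \sqrt{2k}\,l(y)+\int_y\theta^c ,
\]
with equality when $\gamma$ has constant speed $\sqrt{2k}$; in particular ${\mathbb S}_k^{c}(c)=\sqrt{2k}\,l(c)+\int_c\theta^c$. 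If $\gamma$ is $H^1$-close to $c$ it is $C^0$-close, so $y$ and $c$ cobound a thin embedded band $\Sigma$ with $\partial\Sigma=y-c$; replacing in $\partial\Pi_k$ the component $c$ by the far boundary of $\Sigma$ produces an admissible film $\Pi'\in\mathcal F_+(M')$ with $\int_{\Pi'}\sigma=\int_{\Pi_k}\sigma+\int_\Sigma\sigma$ and $l(\partial\Pi')=l(\partial\Pi_k)-l(c)+l(y)$. Since $\int_y\theta^c=\int_c\theta^c+\int_\Sigma\sigma$ by Stokes' theorem, combining these relations with the previous display yields
\[
{\mathbb S}_k^{c}(\gamma)\ \ge\ {\mathbb S}_k^{c}(c)+\big(\mathcal T_k(\Pi')-\mathcal T_k(\Pi_k)\big)\ \ge\ {\mathbb S}_k^{c}(c),
\]
the last inequality because $\Pi_k$ minimizes $\mathcal T_k$. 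Hence $c$ is a local minimizer of the action with energy $k$ on $M'$.

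\emph{Descending to $M$, and the main difficulty.} The projection $\alpha_k:=p'\circ c$ is a closed curve in $M$, and since $p'$ is a local isometry it is again a closed magnetic geodesic with energy $k$. Because $p'$ is a covering, $H^1$-loops in $M$ sufficiently close to $\alpha_k$ lift uniquely to $H^1$-loops near $c$ of the same period, and both ${\mathbb S}_k$ and a suitable local primitive of $\sigma$ pull back under $p'$ up to an additive constant; therefore the local minimality of $c$ transfers to $\alpha_k$. (If $\inf\mathcal T_k<0$ already holds on $M$ this last step is vacuous; when $M$ has genus at least $2$ the whole argument is spelled out in \cite[Section 6]{AB15a}, and the case $M=\T^2$ is identical.) The only genuinely hard ingredient is the one we black-box: existence of a minimizer of $\mathcal T_k$ is obtained by relaxing to a class of integral currents / sets of finite perimeter and using compactness, while the smoothness of the minimizing surface and of its boundary, together with the prescribed-geodesic-curvature boundary condition, rest on the (two-dimensional, hence comparatively mild) regularity theory for this constrained isoperimetric problem. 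Granting this input from \cite{Tai92a,Tai93,CMP04} and \cite{AMP13}, the AM--GM comparison, the cut-and-paste construction of $\Pi'$, and the transfer through the covering are all routine, so the theorem is indeed a corollary.
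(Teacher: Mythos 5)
Your argument reconstructs the route that the paper itself takes and delegates to the references (Ta\u\i manov's existence/regularity theorem from \cite{Tai92a,Tai93,CMP04}, plus \cite[Section~3]{AMP13} and \cite[Section~6]{AB15a}): pass to a finite cover $M'$ where $\inf\mathcal T_k<0$, take a global minimizer $\Pi_k\in\mathcal F_+(M')$ with nonempty smooth boundary of prescribed geodesic curvature $-f/\sqrt{2k}$, show a boundary component $c$ is a local minimizer of ${\mathbb S}_k^c$ via the chain
\[
{\mathbb S}_k^c(y,S)=\frac{e(y)}{S}+kS+\int_y\theta^c\ \geq\ \sqrt{2k}\,\sqrt{2e(y)}+\int_y\theta^c\ \geq\ \sqrt{2k}\,l(y)+\int_y\theta^c,
\]
with equality at $c$, combined with a length--area comparison against $\mathcal T_k$, and finally descend through the covering. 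The inequalities, the Stokes identity $\int_y\theta^c-\int_c\theta^c=\int_\Sigma\sigma$, the bookkeeping giving ${\mathbb S}_k^c(\gamma)-{\mathbb S}_k^c(c)\ge\mathcal T_k(\Pi')-\mathcal T_k(\Pi_k)\ge0$, and the lift/descent through $p'$ are all correct and match what the cited sources do.

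The one place you wave your hands is exactly where the cited lemmas do real work: the ``cut-and-paste'' producing $\Pi'\in\mathcal F_+(M')$. An $H^1$-loop $(y,S)$ near $c$ need not be embedded, need not be a graph over $c$ in Fermi coordinates, and when it is not, the region ``cobounded'' by $y$ and $c$ is not an embedded thin band, so the symmetric-difference surface you would like to form is not an element of $\mathcal F_+(M')$ and the minimality of $\Pi_k$ cannot be invoked directly. Handling this — e.g., by working in a relaxed class of films/finite-perimeter sets in which the competitor is admissible and $\mathcal T_k$ extends, or by resolving a non-embedded $y$ into shorter embedded loops with the same total transgression before comparing — is precisely the content of \cite[Section~3]{AMP13} (and of the genus~$\ge2$ discussion in \cite[Section~6]{AB15a}) that the paper's one-line ``proof'' leans on. Calling this step ``routine'' understates it; otherwise your proposal is correct and is the same proof the paper intends.
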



\section{The minimax classes}
\label{theminimaxclasses}

From now on let $\T^2$ be the two-torus endowed with a Riemannian metric $g$ and an oscillating 2-form $\sigma$. For the rest of the paper we fix $k^*\in (0,\tau_+(g,\sigma))$. By Theorem \ref{Taimanov92}, there exists a local minimizer with energy $k^*$, which we denote by $\alpha_{k^*}$. We notice that, if $\alpha_{k^*}$ is not strict, then there exists a sequence of local minimizers of the action approaching $\alpha_{k^*}$, which are all closed magnetic geodesics with energy $k^*$. Thus, from now on we suppose that the local minimizer $\alpha_{k^*}$ is strict. To prove Theorem \ref{theorem:main} we need only show the following

\begin{proposition}\label{lem:main}
There exists an open interval $I=I(k^*)\subset (0,\tau_+(g,\sigma))$ containing $k^*$ such that for almost every $k\in I$, there exist infinitely many geometrically distinct closed magnetic geodesics with energy $k$.
\end{proposition}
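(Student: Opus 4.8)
The plan is to combine the existence of the strict local minimizer $\alpha_{k^*}$ with a mountain-pass/minimax construction carried out component by component in $\mathcal M$, and then to run a Struwe-type monotonicity argument on the resulting minimax functions. First I would fix the strict local minimizer $\alpha:=\alpha_{k^*}$ and assume, as in the introduction, that $\alpha$ is non-contractible (otherwise the argument of \cite{AB15a} applies verbatim). By Proposition \ref{persistenceoflocalminimizers} each iterate $\alpha^n$ is again a strict local minimizer, and since $\pi_1(\T^2)=\Z^2$ is torsion-free the loops $\alpha^n$ lie in pairwise distinct connected components $\mathcal N^n$ of $\mathcal M$; none of these is $\mathcal M_0$. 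On each $\mathcal N^n$, using the local primitive ${\mathbb S}_{k}^{\alpha}$ of $\eta_k$ on $\mathcal V^\alpha$ and Proposition \ref{strictlocalminimizer}, I would set up a mountain-pass geometry: the neighborhood $\mathcal U^{\alpha^n}$ from Proposition \ref{prp:minnei} separates $\alpha^n$ from $\partial\mathcal U^{\alpha^n}$ in action, so one can define a minimax value $c_n(k)$ over the class of paths in $\mathcal N^n$ joining $\alpha^n$ to a fixed point outside $\mathcal U^{\alpha^n}$ (or to the ``bottom'' of the component), measured via $\eta_k$ along the path. The key structural point is that, because $\eta_k$ is closed, its integral along a path depends only on the endpoints up to homotopy, so $c_n$ is well-defined despite the absence of a global primitive; moreover $k\mapsto c_n(k)$ is monotone (indeed the $T$-integrand $kT$ makes it nondecreasing and locally Lipschitz after subtracting an affine function of $k$), which is what makes Struwe's argument applicable.

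Next I would invoke the Struwe monotonicity argument, generalized to the 1-form setting as announced in the introduction and proved in Section \ref{astruwetypemonotonicityargument}: for a monotone family of minimax values $c_n(k)$ associated to a closed 1-form, at almost every $k$ the function $c_n$ is differentiable, and differentiability at $k$ forces the existence of a vanishing sequence for $\eta_k$ with \emph{bounded periods} (the bound coming from the difference quotient of $c_n$) that stays away from the local minimizer level. Using the completeness of the semi-flow $\Phi^k$ on $\mathcal N^n\neq\mathcal M_0$ together with the ``no short loops'' alternative of Theorem \ref{theorem:ps}(1), one also gets periods bounded away from zero: otherwise the minimax path could be pushed below $\alpha^n$ in the local primitive, contradicting the mountain-pass separation of Proposition \ref{prp:minnei}. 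Then Theorem \ref{theorem:ps}(2) (equivalently Corollary \ref{cor:ps}) yields a convergent subsequence, whose limit $\gamma_n(k)$ is a zero of $\eta_k$ lying in $\mathcal N^n$ with ${\mathbb S}_k^{\alpha}$-value (locally) at least $c_n(k)>{\mathbb S}_k^\alpha(\alpha^n)$, hence $\gamma_n(k)$ is not the minimizer and is of mountain-pass type in its component.

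Then I would carry out the elimination step (Section \ref{elimination}). Fix a $k$ in the full-measure set where all the $\gamma_n(k)$ exist. If $\eta_k$ had only finitely many geometrically distinct zeros with energy $k$, then by Proposition \ref{iterationofmountainpasses} there is a uniform $\nu_0$ such that for $\nu>\nu_0$ no iterate $\beta^\nu$ of any of these zeros is of mountain-pass type: passing to $\mathcal W_\nu$, sublevel connectedness is inherited from the global sublevel set, which kills the mountain-pass local topology. But each $\gamma_n(k)$ lives in its own component $\mathcal N^n$ and is of mountain-pass type, so for infinitely many $n$ the orbit $\gamma_n(k)$ would have to be an $n$-th (or high) iterate of one of the finitely many primitive zeros — contradicting the previous sentence once $n>\nu_0$. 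Hence there are infinitely many geometrically distinct closed magnetic geodesics with energy $k$. Setting $I=I(k^*)$ to be a small interval around $k^*$ contained in $(0,\tau_+(g,\sigma))$ on which the whole construction (existence of $\alpha_k$, the local primitives, the minimax classes) is uniform gives the statement.

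The main obstacle I expect is the careful setup and monotonicity of the minimax functions $c_n(k)$ in the absence of a globally defined action functional: one must define the minimax over paths in $\mathcal N^n$ using the action 1-form $\eta_k$ (integrated along paths, which requires checking independence of the representative and continuity in $k$), verify the mountain-pass geometry survives iteration uniformly (via Propositions \ref{prp:minnei} and \ref{persistenceoflocalminimizers}), and then push the Struwe argument through with the period bounds — precisely the point where completeness of $\Phi^k$ on $\mathcal N^n\neq\mathcal M_0$ and Theorem \ref{theorem:ps} must be combined delicately to prevent both period blow-up and period collapse.
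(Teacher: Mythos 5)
Your high-level outline — minimax on each component $\mathcal N^n$, Struwe monotonicity, elimination via Proposition \ref{iterationofmountainpasses} — matches the paper's strategy, but there are genuine gaps in the way you set up the minimax and in the elimination step.

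The central gap is in the definition of the minimax class. You propose to minimax over ``paths in $\mathcal N^n$ joining $\alpha^n$ to a fixed point outside $\mathcal U^{\alpha^n}$, measured via $\eta_k$ along the path,'' and assert that this is well defined because $\eta_k$ is closed and its integral depends only on the homotopy class of the path. That observation does not make the minimax well defined: $\eta_k$ is \emph{not exact} on $\mathcal N^n$ (Corollary \ref{cor:disc}), so the integral of $\eta_k$ along two paths with the same endpoints can differ, and there is no canonical homotopy class of ``path out of $\mathcal U^{\alpha^n}$''. The paper circumvents this by taking as competitors \emph{loops} $u:[0,1]\to\mathcal N^n$ based in $M_k^n$ with $\int_0^1 u^*\tau\neq 0$ (the class $\mathcal P_n(k)$ of \eqref{miniclasses}): the constraint on the transgression class $\tau$ forces $u$ to leave a neighborhood of $M_k^n$ by Lemma \ref{lem:top}.(ii), replacing the ``fixed point outside'' of your sketch, while the quantity $\mathbb S_k(u,s)=\mathbb S_k^{\alpha_{k^*}}(u(0))+\int_0^s u^*\eta_k$ is tautologically defined along each given $u$. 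The monotonicity of $c_n$ in $k$ (Lemma \ref{monotonicityetakexact}) then needs Lemma \ref{rococo}.(ii) to build comparison competitors, not just the observation that $kT$ is increasing.

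A second gap is that you conclude from Struwe's argument the existence of a single ``mountain-pass'' critical orbit $\gamma_n(k)$ per component, and then run the elimination against Proposition \ref{iterationofmountainpasses} by contradiction. This is too coarse: the Struwe step (Lemma \ref{Struwesurface}) actually produces an \emph{essential family} of vanishing circles in $\mathcal N^n\cap\{T\le T_*+3\}$, and the elimination (Lemma \ref{lem:el}) is the statement that removing a single high iterate $\T\cdot\gamma^\nu$, $\nu>\nu(\gamma)$, from an essential family leaves an essential family; iterating this empties a finite family and contradicts non-emptiness (Remark \ref{r:empty}). Your phrasing ``$\gamma_n(k)$ is of mountain-pass type, so it cannot be a high iterate'' hides the real work: one must control \emph{all} vanishing circles that the minimax could detect at level $c_n(k)$, not one, and the precise condition \textbf{(B2)} in Definition \ref{d:ess} — whose verification in Step 4 of Lemma \ref{Struwesurface} crucially uses the discreteness of $[\eta_k]\big(H_1(\mathcal N^n,\Z)\big)$ from Corollary \ref{cor:disc} — is exactly what lets Proposition \ref{iterationofmountainpasses} be applied consistently across the finitely many local primitives $\mathbb S_k^{\gamma_j}$. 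Neither the essential-family mechanism nor the discreteness of the period lattice of $\eta_k$ appears in your proposal, and without them the elimination step as you wrote it does not go through.
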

The proof of the proposition will occupy us for the remaining of the paper.

If $\alpha_{k^*}$ is contractible then we restrict the study to $\mathcal M_0$. Here, the action 1-form $\eta_k$ becomes exact since $\pi_2(\T^2)=0$ (cf. \cite{Mer10}) with primitive given by 
\begin{equation*}
\A_k(x,T) + \int_{C(x)} \sigma,
\end{equation*} 
where $C(x)$ is any capping disc for $x$.
In this case, Proposition \ref{lem:main} follows by reproducing the same argument as in \cite{AB15a}, having in mind the compactness stated in Theorem \ref{theorem:ps}.

Hence, hereafter we assume that $\alpha_{k^*}$ is non-contractible and for every $n\in\N$ we denote by $\mathcal N^n$ the connected component of $\mathcal M$ containing $\alpha_{k^*}^n$. Recall that by Proposition \ref{strictlocalminimizer} we can find a \textit{bounded} open neighborhood $\mathcal U=\mathcal U^{\alpha_{k^*}}\subseteq \mathcal V^{\alpha_{k^*}}$ of $\T\cdot \alpha_{k^*}$ such that 
\begin{equation*}
\inf_{\partial \mathcal U}{\mathbb S}_{k^*}^{\alpha_{k^*}}>{\mathbb S}_{k^*}^{\alpha_{k^*}}(\alpha_{k^*}).
\end{equation*}
Here $\mathcal V^{\alpha_{k^*}}$ is the open neighborhood of $\T\cdot\alpha_{k^*}^\N$ and ${\mathbb S}_{k^*}^{\alpha_{k^*}}$ is the local primitive of $\eta_{k^*}$ defined at the beginning of Section \ref{localzero}. For all $k\in(0,\tau_+(g,\sigma))$ we define
\begin{equation}
M_k:= \overline{\Big \{ \text{local minimizers of} \ {\mathbb S}_k^{\alpha_{k^*}} \ \text{in} \ \mathcal U\Big \}}.
\label{Mk}
\end{equation}

We now find an interval of energies around $k^*$ where the sets $M_k$ have a good behavior. We refer to \cite[Lemma 3.1 and 3.2]{AMMP14} for the proof.

\begin{lemma}\label{rococo}
There exists an open interval $I=I(k^*)\subset (0,\tau_+(g,\sigma))$ containing $k^*$ and with the following properties:
\begin{enumerate}[(i)]
\item For every $k\in I$ the set $M_k$ is non-empty and compact.
\item For every pair $k_-<k_+ \in I$ and for every $\beta\in M_{k_+}$ there exists a continuous path $w:[0,1]\rightarrow \mathcal U$ such that $w(0)\in M_{k_-}$, $w(1)=\beta$ and 
\begin{equation}\label{eq:rococo}
{\mathbb S}_{k_-}^{\alpha_{k^*}}\circ w \leq {\mathbb S}_{k_-}^{\alpha_{k^*}}(\beta).
\end{equation}
\end{enumerate}	
\end{lemma}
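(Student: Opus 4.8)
The plan is to prove Lemma \ref{rococo} by adapting the arguments of \cite[Lemma 3.1 and 3.2]{AMMP14} to the local primitive $\S_{k^*}^{\alpha_{k^*}}$ on the bounded neighborhood $\mathcal U$, exploiting that on $\mathcal U$ the magnetic system reduces to the Tonelli Lagrangian $L_{\theta^{\alpha_{k^*}}}$ and that $\alpha_{k^*}$ is a \emph{strict} local minimizer. Observe first that ${\mathbb S}_k^{\alpha_{k^*}}(x,T) = {\mathbb S}_{k^*}^{\alpha_{k^*}}(x,T) + (k-k^*)T$, so the family of functionals depends affinely (hence smoothly and monotonically) on $k$, and by Proposition \ref{strictlocalminimizer} the barrier inequality $\inf_{\partial\mathcal U}{\mathbb S}_{k^*}^{\alpha_{k^*}} > {\mathbb S}_{k^*}^{\alpha_{k^*}}(\alpha_{k^*})$ holds at $k=k^*$. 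Since $\mathcal U$ is bounded and $T$ is bounded on $\mathcal U$, for $k$ in a sufficiently small open interval $I$ around $k^*$ the analogous barrier $\inf_{\partial\mathcal U}{\mathbb S}_k^{\alpha_{k^*}} > \sup_{\mathcal U \cap (\text{small ball around }\T\cdot\alpha_{k^*})}{\mathbb S}_k^{\alpha_{k^*}}$ persists. In particular any minimizing sequence for ${\mathbb S}_k^{\alpha_{k^*}}$ over $\mathcal U$ stays in the interior, so global minimizers over $\mathcal U$ exist and every local minimizer in $\mathcal U$ lies in the interior.

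For item (i), I would argue that $M_k$ is non-empty because the functional ${\mathbb S}_k^{\alpha_{k^*}}$ restricted to $\overline{\mathcal U}$ attains its infimum at an interior point (by the barrier estimate), which is then a local minimizer, so $M_k \neq \varnothing$. Compactness follows from Theorem \ref{theorem:ps}(2): since $\mathcal U$ is a bounded neighborhood, the period $T$ is bounded above on $\mathcal U$; moreover for local minimizers $T$ is bounded away from zero (if $T_h \to 0$ on a sequence of local minimizers, then $e(x_h) \to 0$ by Theorem \ref{theorem:ps}(1), forcing the loops to converge to a constant loop, which cannot be a local minimizer of ${\mathbb S}_k^{\alpha_{k^*}}$ because near constant loops the action behaves like that of a short geodesic loop and one checks it is not minimizing — alternatively, constant loops are not even in $\mathcal U$ if $\mathcal U$ is chosen small enough around the non-constant curve $\T\cdot\alpha_{k^*}$). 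Hence a sequence in $M_k$ has a subsequence converging in $\mathcal M$; since local minimizers are zeros of $\eta_k$, the limit is a zero of $\eta_k$, and by closedness of $\overline{\mathcal U}$ and the definition \eqref{Mk} as a closure, the limit lies in $M_k$. Thus $M_k$ is compact.

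For item (ii), given $k_- < k_+$ in $I$ and $\beta \in M_{k_+}$, I would first assume $\beta$ is an actual local minimizer of ${\mathbb S}_{k_+}^{\alpha_{k^*}}$ in $\mathcal U$ (the closure case follows by a diagonal/approximation argument using the continuity of the construction). Run the negative gradient flow of ${\mathbb S}_{k_-}^{\alpha_{k^*}}$ on $\mathcal U$ starting at $\beta$: by the barrier estimate the trajectory cannot exit $\mathcal U$ through $\partial\mathcal U$ as long as it stays below the barrier level, and since ${\mathbb S}_{k_-}^{\alpha_{k^*}}(\beta) \leq {\mathbb S}_{k_+}^{\alpha_{k^*}}(\beta)$ is below the barrier (for $I$ small, using that $\beta$ is near the minimum), the flow stays in $\mathcal U$ for all time and is precompact. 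Its $\omega$-limit is a critical point of ${\mathbb S}_{k_-}^{\alpha_{k^*}}$ in $\mathcal U$, i.e. a closed magnetic geodesic with energy $k_-$; one must upgrade this to a point of $M_{k_-}$, which requires either a deformation-lemma argument pushing the endpoint into the set of local minimizers, or invoking that the infimum over the relevant connected component is achieved there. The path $w$ is the (reparametrized) flow line, and \eqref{eq:rococo} holds because ${\mathbb S}_{k_-}^{\alpha_{k^*}}$ decreases along negative gradient trajectories. The main obstacle is precisely this last point: ensuring that the endpoint $w(0)$ lands in $M_{k_-}$ rather than at some higher-index critical point, which is where the strictness of the minimizer and a careful choice of $I$ (so that ${\mathbb S}_{k_-}^{\alpha_{k^*}}$ has no critical values in $\mathcal U$ between $\inf_{\mathcal U}{\mathbb S}_{k_-}^{\alpha_{k^*}}$ and the barrier level other than those corresponding to local minimizers) enter — this is exactly the content imported from \cite[Lemma 3.1 and 3.2]{AMMP14}, whose proofs carry over verbatim once one notes that $\mathcal U$ being bounded replaces the role of sublevel set compactness there.
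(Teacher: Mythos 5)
Your proposal takes essentially the same route as the paper: the paper gives no proof for Lemma \ref{rococo} and simply refers to \cite[Lemmas 3.1 and 3.2]{AMMP14}, and you ultimately defer to the same reference after supplying the (correct) observations that make the adaptation possible — the affine dependence ${\mathbb S}_k^{\alpha_{k^*}}={\mathbb S}_{k^*}^{\alpha_{k^*}}+(k-k^*)T$, the persistence of the barrier from Proposition \ref{strictlocalminimizer} since $T$ is bounded above and away from zero on the bounded set $\overline{\mathcal U}$, and the Palais--Smale compactness on $\mathcal U$ from Theorem \ref{theorem:ps}(2) (via Ekeland) that yields item (i).

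One substantive correction on your sketch of item (ii): the negative-gradient-flow line starting at $\beta$ cannot by itself serve as the path $w$, and this is not merely a detail to be imported. The $\omega$-limit set of the flow line is a connected subset of $\{\eta_{k_-}=0\}$ at some fixed action level, and there is no reason for it to contain a local minimizer of ${\mathbb S}_{k_-}^{\alpha_{k^*}}$ — it could consist entirely of saddle-type or degenerate zeros — so the flow does not produce an endpoint in $M_{k_-}$, and a flow line that stalls near such a zero does not "continue downhill" by itself. You flag this, and of the two fixes you propose the second is the one that actually works and is what \cite{AMMP14} does: minimize ${\mathbb S}_{k_-}^{\alpha_{k^*}}$ over the connected component $C$ of $\{{\mathbb S}_{k_-}^{\alpha_{k^*}}\leq {\mathbb S}_{k_-}^{\alpha_{k^*}}(\beta)\}\cap\mathcal U$ containing $\beta$. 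The barrier keeps $C$ away from $\partial\mathcal U$, Ekeland plus Theorem \ref{theorem:ps} give a minimizer $\beta_-\in C$, and a separate short argument shows $\beta_-$ is a local minimizer and can be joined to $\beta$ by a path inside $C$. The path $w$ is this connecting path, not a flow line; \eqref{eq:rococo} then holds because $w$ stays in the sublevel set by construction. So: right reference and right diagnosis of where the difficulty lies, but the specific mechanism you propose first (gradient flow to an $\omega$-limit) would fail.
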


Before defining the minimax classes we need some information about the topology of $\mathcal M$, which we now recall.
For every $(a,b)\in\Z^2$ denote by $\mathcal M(a,b)$ the connected component of the loops $\gamma$ such that $\widetilde{\gamma}(T)=(a,b)+\widetilde{\gamma}(0)$, where $T$ is the period of $\gamma$ and $\widetilde \gamma$ is any lift of $\gamma$ to $\R^2$. Let $\mathcal M(a,b)^n$ be the connected component of $\mathcal M$ containing the $n$-th iterates of the elements in $\mathcal M(a,b)$. Since clearly $\mathcal M(a,b)^n= \mathcal M(na,nb),$
the sets $\mathcal M(a,b)^{n}$ are pairwise disjoint for ${(a,b)}\neq (0,0)$. We set $\gamma_{(a,b)}\in\mathcal M(a,b)$ for every $(a,b)\in\Z^2$ to be the projection to $\T^2$ of the path in $\R^2$ given by $t\mapsto (ta/T, tb/T )$. It is a classical fact that
\begin{equation*}
\phi_{(a,b)}:\T^2\rightarrow\mathcal M(a,b)\,,\quad\quad \phi_{(a,b)}(q):= q+ \gamma_{(a,b)}
\end{equation*}
is a homotopy equivalence whose homotopy inverse is the evaluation at zero.

Let now $\bar\sigma\in\Omega^2(\T^2)$ be given by $\bar\sigma=dq^1\wedge dq^2$, where $(q^1,q^2)$ are the Cartesian coordinates in $\R^2$. We can associate to it the \textit{transgression 1-form} $\tau\in\Omega^1(\mathcal M)$ given by 
\begin{equation*}
\tau_{\gamma}:=\int_0^T\bar\sigma_{\gamma(t)}\big(\,\cdot\,,\dot\gamma(t)\big)\,dt,\quad \gamma\in\mathcal M.
\end{equation*}
Such a form is closed and, hence, it yields a cohomology class $[\tau]\in H^1(\mathcal M,\R)$.
\begin{lemma}\label{lem:top}
Let $(a,b)\in\Z^2$, then:
\begin{enumerate}[(i)]
\item We have
\begin{equation}\label{eq:pull}
[\phi_{(a,b)}^*\tau]=-\,[\imath_{(a,b)}\bar\sigma]=\big[\,bdq^1\,-\,adq^2\,\big]\in H^1(\T^2,\R).
\end{equation}
Thus, the restricted class $[\tau]|_{\mathcal M(a,b)}$ is trivial if and only if ${(a,b)}=(0,0)$.
\item For every closed magnetic geodesic $\gamma$ the restriction 
$[\tau]|_{\mathcal V^\gamma}$ is trivial, where $\mathcal V^\gamma$ is the neighborhood of $\T\cdot \gamma^\N$ 
defined in Section \ref{localzero}.
\end{enumerate}
\end{lemma}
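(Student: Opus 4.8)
\textbf{Proof plan for Lemma \ref{lem:top}.}

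The plan is to compute the two cohomology classes essentially by hand, using the homotopy equivalences $\phi_{(a,b)}$ and the explicit formula for $\tau$. For part (i), since $\phi_{(a,b)}$ is a homotopy equivalence, the induced map $\phi_{(a,b)}^*:H^1(\mathcal M(a,b),\R)\to H^1(\T^2,\R)$ is an isomorphism, so it suffices to evaluate $[\phi_{(a,b)}^*\tau]$ directly. First I would observe that for $q\in\T^2$ the loop $\phi_{(a,b)}(q)=q+\gamma_{(a,b)}$ lifts to the straight line $t\mapsto \widetilde q+(ta/T,tb/T)$, so its velocity is the constant vector $(a/T,b/T)$. Plugging this into the definition of $\tau_\gamma$ and using $\bar\sigma=dq^1\wedge dq^2$, I get, for a tangent vector $u=(u^1,u^2)$ at $q$,
\[
(\phi_{(a,b)}^*\tau)_q(u)=\int_0^T \bar\sigma\big(u,(a/T,b/T)\big)\,dt
= T\cdot\big(u^1\cdot (b/T)-u^2\cdot (a/T)\big)= b\,u^1-a\,u^2,
\]
which is precisely the (closed, in fact translation-invariant, hence harmonic) $1$-form $b\,dq^1-a\,dq^2$ on $\T^2$. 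This is exactly $-\imath_{(a,b)}\bar\sigma$ where $\imath_{(a,b)}$ denotes contraction with the constant vector field $(a,b)$, giving \eqref{eq:pull}. The triviality statement then follows because $b\,dq^1-a\,dq^2$ is cohomologically trivial on $\T^2$ if and only if its coefficients vanish, i.e. $(a,b)=(0,0)$; equivalently one can pair it with the two standard loops generating $H_1(\T^2,\Z)$ to see that the periods are $b$ and $-a$.

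For part (ii), the key point is that $\mathcal V^\gamma$ consists of loops whose image avoids the point $q\in\T^2\setminus\gamma(\R)$ chosen in Section \ref{localzero}. On $V^\gamma=\T^2\setminus\{q\}$ the $2$-form $\bar\sigma$ is exact — indeed \emph{every} $2$-form is exact on the noncompact surface $V^\gamma$ — so write $\bar\sigma=d\lambda$ for some $\lambda\in\Omega^1(V^\gamma)$. Then on $\mathcal V^\gamma$ the transgression $1$-form $\tau$ is the differential of the function $\gamma\mapsto\int_0^T\lambda_{\gamma(t)}(\dot\gamma(t))\,dt$; this is the standard fact that the transgression of an exact $2$-form is exact, and it is precisely the mechanism already used in Section \ref{localzero} to produce the local primitive ${\mathbb S}^\gamma_k$ of $\eta_k$ (the magnetic term there is $\int_0^1\sigma_{x(s)}(\cdot,x'(s))\,ds$, whose primitive comes from a primitive of $\sigma$ on $V^\gamma$). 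Hence $[\tau]|_{\mathcal V^\gamma}=0$.

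I do not expect any serious obstacle here: both parts are direct computations once one unwinds the definitions, and the only thing to be slightly careful about is the sign and normalization in the identification of $\phi_{(a,b)}^*\tau$ with $-\imath_{(a,b)}\bar\sigma$, together with keeping straight that the period $T$ cancels so the resulting form on $\T^2$ is the constant-coefficient form claimed. If pressed to name the "hard" step, it is purely bookkeeping: making sure the homotopy equivalence $\phi_{(a,b)}$ and its homotopy inverse (evaluation at $0$) are used consistently so that the computed class really represents $[\tau]|_{\mathcal M(a,b)}$ and not some other pullback.
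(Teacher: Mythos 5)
Your proposal is correct and follows essentially the same route as the paper: part (i) is the same direct computation of $\phi_{(a,b)}^*\tau$ (yielding the constant-coefficient form $b\,dq^1-a\,dq^2 = -\imath_{(a,b)}\bar\sigma$, with the period $T$ cancelling), and part (ii) uses the same observation that $\bar\sigma|_{V^\gamma}$ is exact, so its transgression to $\mathcal V^\gamma$ admits the explicit primitive $\beta\mapsto\int_0^T\beta^*\bar\theta^\gamma$.
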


\begin{proof}
Let $(q,v)\in T\T^2\simeq \T^2\times\R^2$ and compute 
\begin{equation*}
\tau_{\phi_{(a,b)}(q)}\big(d_q\phi_{(a,b)}(v)\big)=\int_0^T\bar\sigma\left (v, \frac 1T{(a,b)}\right )\,dt =\bar\sigma\big(v,(a,b)\big),
\end{equation*}
which yields at once \eqref{eq:pull}. As far as the second statement is concerned, if $\bar\theta^\gamma\in\Omega^1(V^\gamma)$ is a primitive of $\bar\sigma|_{V^\gamma}$, then a primitive of $\tau$ on $\mathcal V^\gamma$ is given by
\begin{equation*}
\beta \mapsto \int_0^{T}\beta^*\bar\theta^\gamma.
\end{equation*}
\end{proof}

\begin{corollary}\label{cor:disc}
For any $k\in(0,+\infty)$ and $(a,b)\in\Z^2$ we have
\begin{equation}
[\phi^*_{(a,b)}\eta_k]= \left(\int_{\T^2}\sigma\right)\cdot \big[bdq^1-adq^2\big] \in H^1(\T^2,\R).
\end{equation}
In particular, the image $[\eta_k]\big(H_1(\mathcal M(a,b),\Z)\big)$ is a discrete subgroup of $\R$.
\end{corollary}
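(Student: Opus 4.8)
The plan is to deduce the first displayed formula of Corollary~\ref{cor:disc} directly from Lemma~\ref{lem:top}, and then to read off the discreteness statement from it.

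\textbf{Step 1: Split $\eta_k$ on $\mathcal M(a,b)$ into an exact part and a transgression part.} Write the magnetic form as $\sigma = \sigma_0 + c\,\bar\sigma$, where $c:=\int_{\T^2}\sigma$ (recall $\int_{\T^2}\bar\sigma=1$) and $\sigma_0:=\sigma - c\,\bar\sigma$ has zero integral over $\T^2$, hence is exact, say $\sigma_0 = d\theta_0$ with $\theta_0\in\Omega^1(\T^2)$. Correspondingly decompose the loop term of $\eta_k$: for $\gamma=(x,T)$ one has $\int_0^1\sigma_{x(s)}(\cdot,x'(s))\,ds = \int_0^1(d\theta_0)_{x(s)}(\cdot,x'(s))\,ds + c\,\tau_\gamma$, where $\tau$ is the transgression $1$-form of $\bar\sigma$ defined just before Lemma~\ref{lem:top} (the reparametrisation between the $[0,1]$-integral and the $[0,T]$-integral is harmless). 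The first two summands of $\eta_k$, namely $d\A_k$ and the $d\theta_0$-term, together form an exact $1$-form on all of $\mathcal M$: indeed the $d\theta_0$-term equals $d\big(\gamma\mapsto\int_0^1\theta_0(x'(s))\,ds\big)$, since $\theta_0$ is globally defined on $\T^2$. Hence in cohomology $[\eta_k]=c\,[\tau]$ on every connected component of $\mathcal M$.

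\textbf{Step 2: Pull back by $\phi_{(a,b)}$ and invoke Lemma~\ref{lem:top}(i).} Applying $\phi_{(a,b)}^*$ and using $[\eta_k]|_{\mathcal M(a,b)} = c\,[\tau]|_{\mathcal M(a,b)}$ gives $[\phi_{(a,b)}^*\eta_k] = c\,[\phi_{(a,b)}^*\tau] = c\cdot[\,b\,dq^1 - a\,dq^2\,]$ in $H^1(\T^2,\R)$, which is exactly the claimed identity. (Alternatively one can argue on cycles instead of classes, but passing through cohomology is cleanest.)

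\textbf{Step 3: Deduce discreteness of $[\eta_k]\big(H_1(\mathcal M(a,b),\Z)\big)$.} Since $\phi_{(a,b)}$ is a homotopy equivalence, it induces an isomorphism $H_1(\T^2,\Z)\xrightarrow{\ \sim\ }H_1(\mathcal M(a,b),\Z)$, so the image subgroup in question equals $\langle [\eta_k], (\phi_{(a,b)})_*(e_1)\rangle\,\Z + \langle [\eta_k], (\phi_{(a,b)})_*(e_2)\rangle\,\Z$, where $e_1,e_2$ generate $H_1(\T^2,\Z)$. By Step~2 these two pairings are $c\cdot b$ and $-c\cdot a$ (up to sign conventions), so the image is $c\,\gcd(a,b)\,\Z$, a cyclic, hence discrete, subgroup of $\R$; if $c=0$ it is the trivial group, still discrete. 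This completes the proof. The only mildly delicate point is Step~1 — keeping track of the exactness of the $\A_k$- and $\theta_0$-parts and the reparametrisation in the transgression term — but this is routine given the global definition of $\theta_0$ on $\T^2$; no genuine obstacle arises.
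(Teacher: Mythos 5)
Your argument is correct and is essentially the one the paper has in mind (the paper simply labels the statement a corollary of Lemma~\ref{lem:top} without spelling out the decomposition). Your Step~1 — splitting $\sigma=\sigma_0+c\,\bar\sigma$ with $\sigma_0$ exact, recognising that $d\A_k$ and the $\theta_0$-transgression are exact on all of $\mathcal M$, and checking that the reparametrisation from $[0,1]$ to $[0,T]$ leaves the transgression term unchanged — is precisely the intended reduction to Lemma~\ref{lem:top}(i), and Steps~2--3 read off the cohomological identity and discreteness exactly as the paper intends.
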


We can now proceed to the definition of the sequence of minimax classes we are interested in. For every $k\in I$ and $n\in\N$  we set
\begin{equation}\label{miniclasses}
\mathcal P_n(k):= \Big\{u:[0,1]\rightarrow \mathcal N^n\ \Big|\ u(0)=u(1)\in M_k^n,\ \int_0^1u^*\tau\neq0 \Big\},
\end{equation} 
where $M^n_k$ is made of the $n$-th iterates of the elements in $M_k$. Since $\mathcal N^n\neq\mathcal M_0$ the set $\mathcal P_n(k)$ is non-empty by Lemma \ref{lem:top}.(i). Intuitively we can think of $\mathcal P_n(k)$ as the set of loops in $\mathcal N^n$ based at some point of $M_k^n$ whose homotopy class in $\pi_1(\mathcal N^n)$ is not contained in the subgroup generated by the $\T$-action, which changes the base point of a loop. 
By Lemma \ref{lem:top}.(ii) we have that all the elements in $\mathcal P_n(k)$ have to leave a suitable neighborhood of $M^n_k$. This last property will be crucial in the proof of Lemma \ref{Struwesurface}.

\begin{remark}\label{r:tail}
If $n\in\N$ and $k_-<k_+$, then the minimax classes $\mathcal P_n(k_-)$ and $\mathcal P_n(k_+)$ are disjoint. However, there is a natural way to relate them. Namely, if $u_+\in\mathcal P_n(k_+)$ and $\beta^n=u_+(0)=u_+(1)$ for some $\beta\in M_{k_+}$, then Lemma \ref{rococo}.(ii) yields $w:[0,1]\to \mathcal U$ with $w(0)\in M_{k_-}$, $w(1)=\beta$ and such that \eqref{eq:rococo} holds. We denote by $w^n,\bar w^n:[0,1]\to\mathcal U^n$ the paths defined as $w^n(s):=(w(s))^n$ and $\bar w^n(s):=(w(1-s))^n$, for all $s\in[0,1]$. Thus, we obtain an element $u_-\in \mathcal P_n(k_-)$ as $u_-:=w^n\ast u_+\ast \bar w^n$, where $\ast$ denotes the concatenation of paths. 
\end{remark}

For every $u\in \mathcal P_n(k)$ we now set 
\[ {\mathbb S}_k(u,s):=  {\mathbb S}_k^{\alpha_{k^*}}\big(u(0)\big) + \int_0^s u^*\eta_k, \quad \forall\, s\in [0,1]\]
and define $c_n:I\rightarrow \R$ by
\begin{equation}
c_n(k) := \inf_{u\in \mathcal P_n(k)} \max_{s\in [0,1]} {\mathbb S}_k(u,s).
\label{minimaxfunctionetaknotexact}
\end{equation}
The relation between the functions ${\mathbb S}$ for two different values of the energy is described in the next lemma, cf. \cite[Lemma 4.2]{AB14} for the proof.
\begin{lemma}\label{lem:mono}
Let $u=(x,T):[0,1]\rightarrow\mathcal M$ be a continuous path such that $u(0)\in\mathcal V^{\alpha_{k^*}}$. If $k_-$ and $k_+$ are positive real numbers, we have
\begin{equation}\label{eq:intvar}
{\mathbb S}_{k_+}(u,s)= {\mathbb S}_{k_-}(u,s)+ (k_+-k_-)T(s),\quad\forall\, s\in[0,1].
\end{equation}
\end{lemma}

An important consequence of Remark \ref{r:tail} and Lemma \ref{lem:mono} is the monotonicity of the minimax functions.

\begin{lemma}
If $n\in \N$ and $k_-,k_+\in I$ with $k_-<k_+$, then $c_n(k_-)\leq c_n(k_+)$. 
\label{monotonicityetakexact}
\end{lemma}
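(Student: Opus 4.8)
The plan is to show that every minimax path for the higher energy $k_+$ can be transformed, at a cost controlled by the energy difference, into an admissible path for the lower energy $k_-$ whose maximal ${\mathbb S}_{k_-}$-value does not exceed $c_n(k_+)$. Fix $k_-<k_+$ in $I$ and let $u_+\in\mathcal P_n(k_+)$ be arbitrary; the goal is to produce $u_-\in\mathcal P_n(k_-)$ with $\max_s {\mathbb S}_{k_-}(u_-,s)\le \max_s {\mathbb S}_{k_+}(u_+,s)$, since taking the infimum over $u_+$ then yields $c_n(k_-)\le c_n(k_+)$.

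First I would invoke Remark \ref{r:tail}: writing $\beta^n=u_+(0)=u_+(1)$ with $\beta\in M_{k_+}$, Lemma \ref{rococo}.(ii) gives a path $w:[0,1]\to\mathcal U$ with $w(0)\in M_{k_-}$, $w(1)=\beta$, and ${\mathbb S}^{\alpha_{k^*}}_{k_-}\circ w\le {\mathbb S}^{\alpha_{k^*}}_{k_-}(\beta)$. Then $u_-:=w^n\ast u_+\ast\bar w^n$ lies in $\mathcal P_n(k_-)$, since the class $\int_0^1 u_-^*\tau=\int_0^1 u_+^*\tau\ne0$ (the contributions of $w^n$ and $\bar w^n$ cancel because $\bar w^n$ retraces $w^n$) and the new endpoint $w^n(0)=(w(0))^n\in M^n_{k_-}$. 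Next I would estimate $\max_s{\mathbb S}_{k_-}(u_-,s)$ by splitting according to which of the three sub-paths the parameter lies in.

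On the middle portion (the reparametrized $u_+$), Lemma \ref{lem:mono} gives ${\mathbb S}_{k_-}(u_+,s)={\mathbb S}_{k_+}(u_+,s)-(k_+-k_-)T(s)\le {\mathbb S}_{k_+}(u_+,s)$, using $k_+>k_-$ and $T(s)>0$; hence on this portion ${\mathbb S}_{k_-}(u_-,\cdot)\le\max_s{\mathbb S}_{k_+}(u_+,s)$. One must be slightly careful that ${\mathbb S}_{k_-}(u_-,\cdot)$ restricted to the middle portion and ${\mathbb S}_{k_-}(u_+,\cdot)$ differ only by the constant $\int$ of $\eta_{k_-}$ accumulated along $w^n$; since $w$ takes values in $\mathcal V^{\alpha_{k^*}}$ where $\eta_{k_-}={\rm d}{\mathbb S}^{\alpha_{k^*}}_{k_-}$, that accumulated integral is exactly ${\mathbb S}^{\alpha_{k^*}}_{k_-}(\beta)-{\mathbb S}^{\alpha_{k^*}}_{k_-}(w(0))$, and adding the base value ${\mathbb S}^{\alpha_{k^*}}_{k_-}(w(0))$ at $s=0$ makes the value at the start of the middle portion equal ${\mathbb S}^{\alpha_{k^*}}_{k_-}(\beta)={\mathbb S}_{k_-}(u_+,0)$, so the identification is consistent. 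On the two tail portions $w^n$ and $\bar w^n$, again by exactness of $\eta_{k_-}$ on $\mathcal V^{\alpha_{k^*}}$ the value of ${\mathbb S}_{k_-}(u_-,\cdot)$ equals ${\mathbb S}^{\alpha_{k^*}}_{k_-}\big((w(\cdot))^n\big)=n\,{\mathbb S}^{\alpha_{k^*}}_{k_-}(w(\cdot))$ up to the appropriate additive normalization, which by \eqref{eq:rococo} is bounded above by $n\,{\mathbb S}^{\alpha_{k^*}}_{k_-}(\beta)={\mathbb S}^{\alpha_{k^*}}_{k_-}(\beta^n)={\mathbb S}_{k_-}(u_+,0)\le\max_s{\mathbb S}_{k_-}(u_+,s)\le\max_s{\mathbb S}_{k_+}(u_+,s)$. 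Combining the two cases gives $\max_s{\mathbb S}_{k_-}(u_-,s)\le\max_s{\mathbb S}_{k_+}(u_+,s)$, and passing to the infimum over $u_+\in\mathcal P_n(k_+)$ completes the proof.

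I expect the main obstacle to be purely bookkeeping: making sure that the additive constant implicit in the definition ${\mathbb S}_k(u,s)={\mathbb S}^{\alpha_{k^*}}_k(u(0))+\int_0^s u^*\eta_k$ is tracked correctly across the three concatenated pieces, in particular that the ``$n$-th iterate'' normalization ${\mathbb S}^{\alpha_{k^*}}_k(\xi^n)=n{\mathbb S}^{\alpha_{k^*}}_k(\xi)$ is applied consistently and that one genuinely only ever uses $\eta_{k_-}$ on the region $\mathcal V^{\alpha_{k^*}}$ (for the tails) versus the comparison \eqref{eq:intvar} on the middle. No new analytic input beyond Lemma \ref{rococo}, Lemma \ref{lem:mono}, and Remark \ref{r:tail} is needed.
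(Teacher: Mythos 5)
Your proposal is correct and follows essentially the same route as the paper's own proof: both construct $u_-=w^n\ast u_+\ast\bar w^n$ via Remark \ref{r:tail}, estimate ${\mathbb S}_{k_-}(u_-,\cdot)$ piecewise using the exactness of $\eta_{k_-}$ on $\mathcal V^{\alpha_{k^*}}$ together with inequality \eqref{eq:rococo} for the tails and Lemma \ref{lem:mono} for the middle portion, and then pass to the infimum. The ``additive normalization'' you flag for the second tail is exactly the accumulated integral $\int_0^1 u_+^*\eta_{k_-}$, which the paper handles by writing the bound as ${\mathbb S}_{k_-}(u_+,1)$ instead of ${\mathbb S}_{k_-}(u_+,0)$; your coarser bound by $\max_s{\mathbb S}_{k_-}(u_+,s)$ covers both cases, so the argument goes through.
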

\begin{proof}
Consider $u_+\in \mathcal P_n(k_+)$ and let $u_-=w^n\ast u_+\ast \bar w^n\in\mathcal P_n(k_-)$ be the path defined in Remark \ref{r:tail}. In particular, we have $u_-|_{[1/3,2/3]}\ =\ u_+\big (3\, (\cdot -1/3)\big )$ and, if $\beta^n:=w^n(0)=w^n(1)=u_+(0)=u_+(1)$, then
\begin{equation}
u_-\big ([0,1/3]\big ) = w^n\big ([0,1]\big ) \subseteq \big \{ {\mathbb S}_{k_-}^{\alpha_{k^*}}\leq {\mathbb S}_{k_-}^{\alpha_{k^*}}(\beta^n)\},
\label{grugrugru}
\end{equation}
since $w$ satisfies \eqref{rococo} and ${\mathbb S}_{k_-}^{\alpha_{k^*}}$ is $\N$-equivariant. Analogously,
\begin{equation}
u_-\big ([2/3,1]\big ) =\bar w^n\big ([0,1]\big )\subseteq \big \{ {\mathbb S}_{k_-}^{\alpha_{k^*}}\leq {\mathbb S}_{k_-}^{\alpha_{k^*}}(\beta^n)\}.
\label{grugrugru'}
\end{equation}
Thus, if $s\in[0,1/3]$, \eqref{grugrugru} implies that
\begin{equation*}
{\mathbb S}_{k_-}(u_-,s)= {\mathbb S}_{k_-}^{\alpha_{k^*}}(u_-(s))\leq {\mathbb S}_{k_-}^{\alpha_{k^*}}(\beta^n) \leq {\mathbb S}_{k_+}^{\alpha_{k^*}}(u_+(0))={\mathbb S}_{k_+}(u_+,0).
\end{equation*}
If $s\in [1/3,2/3]$, applying \eqref{eq:intvar} we get
\begin{align*}
{\mathbb S}_{k_-}(u_-,s) = {\mathbb S}_{k_-}^{\alpha_{k^*}}(u_+(0)) + \int_{0}^{3(s-1/3)} \hspace{-20pt}u_+^*\eta_{k_-} &={\mathbb S}_{k_-}\big (u_+,3(s-1/3)\big )\\
&\leq {\mathbb S}_{k_+}\big (u_+,3(s-1/3)\big ).
\end{align*}
Finally, if $s\in [2/3,1]$, \eqref{grugrugru'} implies that
\begin{align*}
{\mathbb S}_{k_-}(u_-,s) &= {\mathbb S}_{k_-}^{\alpha_{k^*}}(u_+(0)) + \int_0^1 u_+^*\eta_{k_-} + \int_{2/3}^{s} u_-^*\eta_{k_-}\\
&= {\mathbb S}_{k_-}(u_+,1) + {\mathbb S}_{k_-}^{\alpha_{k^*}}(u_-(s)) - {\mathbb S}_{k_-}^{\alpha_{k^*}}(\beta^n) \\
& \leq {\mathbb S}_{k_+}(u_+,1).
\end{align*}
Summarizing, we have that 
\[c_n(k_-) \leq\max_{s\in [0,1]} {\mathbb S}_{k_-}(u_-,s)\leq \max_{s\in [0,1]} {\mathbb S}_{k_+}(u_+,s)\]
and hence taking the infimum over $u_+\in \mathcal P_n(k_+)$ we see that $c_n(k_-)\leq c_n(k_+)$.
\end{proof}


\section{Essential families and the mountain-pass lemma}
\label{astruwetypemonotonicityargument}

In the previous section we have defined the minimax classes and the corresponding minimax functions. In order to relate these objects to the zeros of $\eta_k$, we introduce now the notion of essential family. 
\begin{definition}\label{d:ess}
Let $\mathcal E=\cup_{j\in J}\T\cdot\gamma_j$ be a union of isolated vanishing circles for $\eta_k$ on $\mathcal N^n$, where $J$ is a set of indices. A separating neighborhood $\mathcal W=\cup_{j\in J}\mathcal W_j$ for $\mathcal E$ is a union of disjoint connected open sets $\mathcal W_j$ such that $\T\cdot\gamma_j\subset \mathcal W_j\subset \mathcal V^{\gamma_j}$, for all $j\in J$. We say that $\mathcal E$ is an essential family for $\mathcal P_n(k)$ if for all separating neighborhoods $\mathcal W$ for $\mathcal E$, there exists $u\in\mathcal P_n(k)$ and $\lambda>0$ such that
\begin{align*}\label{eq:ess}
\mathbf{(A)}&\quad \{0,1\}\subset\big\{ {\mathbb S}_k(u,\,\cdot\,)\leq c_n(k)-\lambda\big\},\\
\mathbf{(B)}&\quad \forall\,s\in\big\{{\mathbb S}_k(u,\,\cdot\,)>c_n(k)-\lambda\big\},\ \exists\, j\in J\ \text{such that}\\ 
&\quad \mathbf{(B1)}\ \ u(s)\in\mathcal W_j,\qquad \mathbf{(B2)}\ \ c_n(k)-{\mathbb S}_k(u,s)={\mathbb S}_k^{\gamma_j}(\gamma_j)-{\mathbb S}^{\gamma_j}_k(u(s))\,.
\end{align*}
\end{definition}

\begin{remark}\label{r:empty}
By the definition of $c_n(k)$ every essential family is non-empty.
\end{remark}
\begin{remark}\label{r:pri}
Condition (B2) might seem a bit mysterious at first sight. However, it just says that $\gamma_j$ is a critical point at level $c_n(k)$ for the local primitive of $\eta_k$ defined as
\begin{equation*}
{\mathbb S}^{\gamma_j,u,s}_k : \mathcal W_j\to\R,\qquad {\mathbb S}^{\gamma_j,u,s}_k(\beta):= {\mathbb S} ^{\gamma_j}_k(\beta)+\Big({\mathbb S}_k(u,s)-{\mathbb S}^{\gamma_j}_k\big(u(s)\big)\Big).
\end{equation*}
Namely, condition \textbf{(B2)} is equivalent to
\begin{equation*}
\mathbf{(B2')}\quad {\mathbb S}^{\gamma_j,u,s}_k(\gamma_j)=c_n(k).
\end{equation*}
The quantity ${\mathbb S}^{\gamma_j,u,s}_k (\beta)$ can also be defined as the sum of ${\mathbb S}^{\alpha_{k^∗}}_k (u(0))$ and the integral of $\eta_k$ over the concatenation of $u|_{[0,s]}$ with any path connecting $u(s)$ to $\beta$ within $\mathcal W_j$.
\end{remark}

We can now establish the mountain pass lemma, whose proof relies on the celebrated Struwe's monotonicity argument \cite{Str90}, and that produces a finite essential family for $\mathcal P_n(k)$, whenever $n$ and $k$ satisfy some special hypotheses described in the statement below and discussed further in Remark \ref{oss:hyp}.

Thus, let $\alpha_{k^*}$ be a non-contractible strict local minimizer belonging to a connected component $\mathcal N\neq \mathcal M_0$ of $\mathcal M$. Let $\mathcal U\subset \mathcal V^{\alpha_{k^*}}$ be a bounded neighborhood of $\T\cdot\alpha_{k^*}$ satisfying \eqref{inequalitystrictlocalminimizer} and let $I\subset (0,\tau_+(g,\sigma))$ be the open interval containing $k^*$ given by Lemma \ref{rococo}. For every $k\in I$, let $M_k$ be the subset of $\mathcal U$ as in \eqref{Mk}, $\mathcal P_n(k)$ be the minimax class as in \eqref{miniclasses} and $c_n(k)$ be the minimax value as in \eqref{minimaxfunctionetaknotexact}. 
\begin{lemma}\label{Struwesurface}
Let $n\in \N$ and $k\in I$ be such that $c_n:I\to \R$ is differentiable at $k$. Let $T_*>0$ be a Lipschitz constant for $c_n$ at $k$ such that
\begin{equation}\label{eq:lip}
T_* > n\cdot \sup_{\mathcal U}\ T-2
\end{equation}
and such that the vanishing circles for $\eta_k$ contained in 
$\mathcal N^n\cap\{T\leq T_*+3\}$ form a discrete set. If we call $\mathcal E_n(k)$ the union of such vanishing circles, then $\mathcal E_n(k)$ is essential for $\mathcal P_n(k)$.
\end{lemma}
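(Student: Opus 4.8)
The plan is to run Struwe's monotonicity argument in the present non-exact setting, following the scheme of \cite[Section 4]{AMMP14} but carefully tracking the local primitives ${\mathbb S}_k^{\gamma_j}$ since no global primitive of $\eta_k$ exists on $\mathcal N^n$. The differentiability of $c_n$ at $k$ is what makes the argument work: it gives a two-sided control on how much the minimax value can change, and this control — together with the monotonicity from Lemma \ref{monotonicityetakexact} — forces the existence of ``almost critical'' paths that are pinned near the vanishing circles of $\eta_k$.

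First I would fix a separating neighborhood $\mathcal W=\cup_j\mathcal W_j$ for $\mathcal E_n(k)$ and argue by contradiction, assuming $\mathcal E_n(k)$ is \emph{not} essential for $\mathcal P_n(k)$, i.e. for every $u\in\mathcal P_n(k)$ and every $\lambda>0$ at least one of (A), (B) fails. Using that $c_n$ is differentiable at $k$ with Lipschitz constant $T_*$, pick a sequence $k_h\uparrow k$ (and $k_h\downarrow k$) and, for each, a path $u_h\in\mathcal P_n(k_h)$ nearly realizing $c_n(k_h)$; transport it to $\mathcal P_n(k)$ via the construction of Remark \ref{r:tail}. Lemma \ref{lem:mono} shows that along the transported path the value ${\mathbb S}_k$ differs from ${\mathbb S}_{k_h}$ by $(k-k_h)T(s)$, so on the ``high'' part of the path — where ${\mathbb S}_k(u_h,\cdot)>c_n(k)-\lambda$ — the period $T(s)$ must be bounded above by roughly $\big(c_n(k)-c_n(k_h)\big)/(k-k_h)+O(\lambda)\le T_*+o(1)$; on the ``tail'' pieces $w^n,\bar w^n$ the period is at most $n\sup_{\mathcal U}T<T_*+2$ by \eqref{eq:lip}. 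Hence, up to the small error and the $\pm 2,\pm 3$ slack, the whole relevant portion of $u_h$ lies in $\mathcal N^n\cap\{T\le T_*+3\}$, where by hypothesis the vanishing circles form the discrete set $\mathcal E_n(k)$.

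Next, using Corollary \ref{cor:ps} (with $T^*:=T_*+3$ and $\mathcal Y'$ a neighborhood of $\mathcal E_n(k)$ shrunk inside $\mathcal W$), the negative semi-flow $\Phi^k$ of $X_k$ decreases ${\mathbb S}_k$ at a definite rate $\varepsilon>0$ outside $\mathcal Y'$ on that sublevel of the period. Here is the key point: because $\mathcal E_n(k)$ is not essential, I can deform $u_h$ by $\Phi^k$ (keeping endpoints in $M_k^n$ — here Proposition \ref{strictlocalminimizer} and the fact that $M_k$ consists of local minimizers guarantee the flow does not push the endpoints out) to push the maximum of ${\mathbb S}_k$ strictly below $c_n(k_h)$ minus a definite amount, \emph{for $k_h$ close enough to $k$}; this contradicts the definition of $c_n(k_h)$ as an infimum over $\mathcal P_n(k_h)$ after transporting back. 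The negation of (A) or (B) is exactly what is needed to perform this deformation cleanly: if (A) fails we already have endpoints above $c_n(k)-\lambda$, which is impossible since endpoints lie in $M_k^n$ where ${\mathbb S}_k$ is close to its minimum; if (B) fails, then some point on the high part is either outside all the $\mathcal W_j$ (so $\Phi^k$ decreases it by $\ge\varepsilon$, by Corollary \ref{cor:ps}) or is in some $\mathcal W_j$ but violates (B2), meaning $u(s)$ is not at the critical level of the shifted local primitive ${\mathbb S}_k^{\gamma_j,u,s}$ and hence $\Phi^k$ still strictly decreases ${\mathbb S}_k^{\gamma_j}=$ (up to a constant) ${\mathbb S}_k$ near that point. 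Patching these local decreases into a global deformation — using a partition of unity subordinate to $\{\mathcal W_j\}$ and its complement, exactly as in the classical deformation lemma — produces the desired $u\in\mathcal P_n(k_h)$ with strictly smaller max, the contradiction.

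\textbf{The main obstacle} I expect is bookkeeping the local primitives: since $\eta_k$ has no primitive on $\mathcal N^n$, the ``value'' ${\mathbb S}_k(u,s)$ depends on the path $u$ up to $s$, and when one deforms $u$ by $\Phi^k$ one must check that this path-dependent value behaves consistently — in particular that within each $\mathcal W_j$ the function $\beta\mapsto{\mathbb S}_k^{\gamma_j,u,s}(\beta)$ of Remark \ref{r:pri} really is a genuine primitive of $\eta_k$ there, so that $\Phi^k$ decreases it, and that the endpoint constants match up when one concatenates. This is precisely the point where Lemma \ref{lem:top}.(ii) / Corollary \ref{cor:disc} (triviality of $[\tau]$, hence of $[\eta_k]$ up to the discrete period group, on each $\mathcal V^{\gamma}$) is used to make the local primitives well-defined and single-valued on $\mathcal W_j$. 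The second delicate point is ensuring the deformation does not create \emph{new} crossings of the $c_n(k)$-level coming from the tail pieces $w^n$, $\bar w^n$; this is handled by \eqref{eq:rococo}, which keeps those pieces strictly below the relevant level, together with the period bound \eqref{eq:lip} that confines them to the region where Corollary \ref{cor:ps} applies.
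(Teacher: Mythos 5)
Your high-level framework is right — Struwe monotonicity, Lipschitz bound from the differentiability of $c_n$, transport to $\mathcal P_n(k)$ via Remark \ref{r:tail}, period control keeping the high part of the path in $\{T\le T_*+3\}$, flowing by $\Phi^k$ and invoking Corollary \ref{cor:ps}. But there is a genuine gap in your treatment of condition \textbf{(B2)}, which is precisely the new difficulty created by the non-exactness of $\eta_k$.

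You argue by contradiction: if $\mathcal E_n(k)$ were not essential, every deformed path would have a high-level point $u(s)$ that either lies outside every $\mathcal W_j$ or lies in some $\mathcal W_j$ but ``violates (B2)'', and in the latter case you claim $\Phi^k$ still strictly decreases ${\mathbb S}_k$ near $u(s)$, so you patch local decreases into a global one. This last step fails. Violation of (B2) says only that the \emph{additive constant} is wrong, i.e.\ ${\mathbb S}^{\gamma_j,u,s}_k(\gamma_j)\neq c_n(k)$; it does \emph{not} mean that $u(s)$ is away from the vanishing circle $\T\cdot\gamma_j$. If $u(s)$ is close to $\T\cdot\gamma_j$, then $|\eta_k(u(s))|$ is small and $\Phi^k$ barely moves $u(s)$ regardless of whether (B2) holds, so there is no uniform rate of decrease $\varepsilon>0$ and your contradiction does not close. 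In other words, (B2) is not a condition you can force by flowing; it has to be \emph{verified} for the paths you already have.

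The paper's proof is direct, not by contradiction: it shows that the deformed paths $u_h^1$ themselves, for $h$ large, satisfy (A) and (B). The key argument for (B2) is the one you are missing. Setting $a_h:={\mathbb S}_k^{\gamma,u_h^1,s_h}(\gamma)$, the difference $a_h-a_{h'}$ is the integral of $\eta_k$ over a \emph{closed} loop $v_{h,h'}:\T\to\mathcal N^n$ (the concatenation of $u_h^1|_{[0,s_h]}$, paths inside $\mathcal W_j$, and $u_{h'}^1|_{[0,s_{h'}]}$ reversed); by Corollary \ref{cor:disc} the period group $[\eta_k]\big(H_1(\mathcal M(a,b),\Z)\big)$ is discrete, so distinct values of $a_h$ are separated by a fixed $\delta>0$. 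Combined with $a_h\to c_n(k)$ (from ${\mathbb S}_k(u_h^1,s_h)\to c_n(k)$ and $u_h^1(s_h)\to\gamma$), this forces $a_h=c_n(k)$ eventually, i.e.\ (B2). Your use of Corollary \ref{cor:disc} to make local primitives ``well-defined on $\mathcal W_j$'' is not where the discreteness enters: the primitives are well-defined on $\mathcal V^\gamma$ by Lemma \ref{lem:top}.(ii) alone; discreteness is needed to compare the shift constants across different times $s_h$. Finally, a smaller point: only $k_h\downarrow k$ works, since Remark \ref{r:tail} transports from $\mathcal P_n(k_+)$ to $\mathcal P_n(k_-)$ for $k_-<k_+$ — writing ``$k_h\uparrow k$ (and $k_h\downarrow k$)'' suggests you have not fixed the direction, and $k_h\uparrow k$ would not let you transport $u_h$ into $\mathcal P_n(k)$.
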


\begin{remark}\label{oss:hyp}
The hypotheses that we put on $n$ and $k$ are very natural in view of Proposition \ref{lem:main}. First, by the Lebesgue differentiation theorem the function $c_n:I\rightarrow\R$ is differentiable almost everywhere since it is monotone by Lemma \ref{monotonicityetakexact}. Second, if $\mathcal N^n\cap\{T\leq T_*+3\}$ contains infinitely many vanishing circles, then the existence of infinitely many geometrically distinct closed magnetic geodesics with energy $k$ trivially follows.
\end{remark}

\begin{proof}
We divide the argument into four steps.
\medskip

\textbf{Step 1.} Choose a strictly decreasing sequence $k_h\downarrow k$ and set $\lambda_h:=k_h-k$. Without loss of generality we may suppose that for all $h\in \N$, there holds
\begin{equation}
 c_n(k_h)-c_n(k) \leq T_{*}\lambda_h .
\label{lipschitzcontinuityk}
\end{equation}
For every $h\in \N$ choose $u_h=(x_h,T_h)\in \mathcal P_n(k_h)$ such that 
\[\max_{s\in [0,1]} {\mathbb S}_{k_h}(u_h,s) < c_n(k_h) + \lambda_h.\]
\noindent Let $s\in[0,1]$ be such that ${\mathbb S}_{k}(u_h,s) >c_n(k)-\lambda_h$. It follows 
\[T_h(s) = \frac{ {\mathbb S}_{k_h}(u_h,s) - {\mathbb S}_k(u_h,s)}{\lambda_h} < \frac{(c_n(k_h)+\lambda_h)-(c_n(k)-\lambda_h)}{\lambda_h} \leq T_* + 2\]
and at the same time, using \eqref{lipschitzcontinuityk},
\begin{equation*}
{\mathbb S}_k(u_h,s) \leq {\mathbb S}_{k_h}(u_h,s) < c_n (k)+ (T_*+1) \lambda_h .
\end{equation*}
Thus, $\forall\,h\in \N$ and $\forall\,s \in[0,1]$ one between conditions $(a)$ and $(b)$ below holds:
\begin{align}
(a)&\quad {\mathbb S}_{k}(u_h,s) \leq c_n(k)- \lambda_h ,\label{primaalternativak}\\
(b)&\quad {\mathbb S}_k(u_h,s) \in \Big(c_n(k)-\lambda_h, c_n(k)+(T_*+1)\lambda_h\Big), \ \ T_h(s)<T_*+2 .\label{secondaalternativak}
\end{align}

By Remark  \ref{r:tail} applied to $k_-=k$ and $k_+=k_h$, we can form the concatenated paths $w_h^n\ast u_h\ast \bar w^n_h$, which belong to $\mathcal P_n(k)$. Thanks to \eqref{eq:lip} and the properties of $w_h$ prescribed by Lemma \ref{rococo}.(ii), $w_h^n\ast u_h\ast \bar w^n_h$ satisfy the above dichotomy, as well. By a slight abuse of notation we will also denote such paths by $u_h$, so that, from now on, $u_h$ will be an element of $\mathcal P_n(k)$. Moreover, by \eqref{eq:lip} the set $M_k$ is the union of a finite number of vanishing circles, which are then strict local minimizers of the action. Up to taking a subsequence and using the $\T$-action on loops, we may also suppose that all the paths $u_h$ start from and end at the same strict local minimizer $\xi\in M_k^n$.
\medskip

\textbf{Step 2.} Let $\Phi^k$ denote the semi-flow of the bounded vector field $X_k$ conformally equivalent to $-\sharp \eta_k$ defined in \eqref{eq:xk}. As we saw in Section \ref{theaction1form}, the restriction of such a semi-flow to $\mathcal N^n$ is complete since $\mathcal N^n\neq\mathcal M_0$. Hence, we can consider for every $r\in[0,1]$ the element $u_h^r\in\mathcal P_n(k)$ given by
\begin{equation*}
u_h^r(s):=\Phi^k_r(u_h(s)), \quad \forall\, s\in[0,1].
\end{equation*}
Since $\eta_k$ is a closed form, for all $r,s\in[0,1]$, Stokes' Theorem yields the relation
\begin{equation}\label{eq:dec}
\mathbb S_k(u_h^r,s)=\mathbb S_k(u_h,s) + \int_0^r \eta_k\left(\frac{\partial }{\partial r'}u_h^{r'}(s)\right)d r'\,.
\end{equation}
As $\tfrac{\partial }{\partial r'}u_h^{r'}(s)=X_k$, the second term on the right is non-positive. Hence, the map $r'\mapsto {\mathbb S}_{k}(u^{r'}_h,s)$ is decreasing. Combining this fact with \eqref{primaalternativak} and \eqref{secondaalternativak}, we see that, for all $s\in[0,1]$, one between conditions $(a')$ and $(b')$ below holds:
\begin{align*}
(a')&\quad {\mathbb S}_{k}(u_h^1,s)\leq  c_n(k)-\lambda_h,\\
(b')&\quad {\mathbb S}_{k}(u_h^r,s) \in \Big(c_n(k) - \lambda_h, c_n(k) + (T_*+1)\lambda_h\Big),\quad \forall\, r\in[0,1].
\end{align*}
Suppose that $s\in[0,1]$ satisfies the second alternative. We get that
\begin{equation}\label{eq:below2}
{\mathbb S}_{k}(u_h^r,s)-  {\mathbb S}_{k}(u_h,s)> (c_n(k) - \lambda_h) - \big(c_n(k)+(T_*+1)\lambda_h\big)= - (T_*+2) \lambda_h.
\end{equation}
This inequality enables us to bound the variation of the period along a flow line. Indeed, in general, it holds
\begin{equation*}
|T_h^r(s)-T_h(s)|^2\leq r\cdot\left(-\int_0^r \eta_k\left(\frac{\partial }{\partial r'}u_h^{r'}(s)\right)d r'\right),\quad\forall\, r\in[0,1].
\end{equation*}
Combining this estimate with \eqref{eq:dec} and \eqref{eq:below2}, we obtain
\[T^r_h(s) \leq \big |T^r_h(s)-T_h(s)\big | + T_h(s) \leq \sqrt{r(T_*+2)\,\lambda_h}+(T_*+2) < T_*+3,\]
where the last inequality is true for $h$ sufficiently large.
\medskip

\textbf{Step 3.} We claim that for every neighborhood $\mathcal Y$ of the family $\mathcal E_n(k)$ and for all $h$ sufficiently large, the following implication holds:
\begin{equation}
\forall\, s\in[0,1],\quad {\mathbb S}_k(u^1_h,s)> c_n(k)-\lambda_h\quad\Longrightarrow\quad u^1_h(s) \in \mathcal Y.
\end{equation}
Consider a neighborhood $\mathcal Y'\subset \mathcal Y$ of $\mathcal E_n(k)$ such that $\Phi^k_r(\mathcal Y')\subset \mathcal Y$ for all $r\in[0,1]$. We apply Corollary \ref{cor:ps} to $T_*+3$ and we find $\varepsilon>0$ such that
\begin{equation}\label{eq:below}
\{T\leq T_*+3\}\setminus \mathcal Y' \subset\big\{|\eta_k(X_k)|\geq\varepsilon\big\}.
\end{equation}
Suppose that, for some $s\in[0,1]$, 
\[{\mathbb S}_k(u^1_h,s)> c_n(k)- \lambda_h,\quad u^1_h(s)\notin \mathcal Y.
\] 
Then, $u^r_h(s)\in \{T\leq T_*+3\}\setminus\mathcal Y'$ for all $r\in[0,1]$. Combining Estimate \eqref{eq:below} with Equation \eqref{eq:dec} we find a contradiction to \eqref{eq:below2} for $h$ large enough:
\begin{equation*}
{\mathbb S}_{k}(u_h^1,s)- {\mathbb S}_{k}(u_h,s) \leq - \varepsilon. 
\end{equation*} 

\textbf{Step 4.} Let $\mathcal W$ be a separating neighborhood for $\mathcal E_n(k)$. We claim that there exists $h_{\mathcal W}$ such that if $h\geq h_{\mathcal W}$, then $u^1_h$, $\lambda_h$ and $\mathcal W$ satisfy conditions \textbf{(A)} and \textbf{(B)} in Definition \ref{d:ess}.

Let $\xi$ be the element in $M_k^n$ given by Step 1. Since $\xi$ is a strict local minimizer, by Proposition \ref{prp:minnei} it has a neighborhood $\mathcal U^\xi\subset \mathcal V^{\alpha_{k^*}}$ satisfying \eqref{inequalitystrictlocalminimizer}. By Lemma \ref{lem:top}, $u^1_h([0,1])$ is not contained in $\mathcal U^\xi$ for any $h\in\N$. In particular, there exist a smallest $s^-_h$ and a largest $s^+_h$ in $[0,1]$ such that $u_h^1(s^\pm_h)\in\partial \mathcal U^\xi$ and hence there exists $\rho>0$ such that 
\[
{\mathbb S}_k^{\alpha_{k^*}} (\xi) +\rho < {\mathbb S}_k^{\alpha_{k^*}}(u_h^1(s_h^\pm)).
\]
Using the fact that $\max {\mathbb S}_k(u_h^1,\cdot)\leq c_n(k)+(T_*+1)\lambda_h$, we now compute
\begin{align*}
{\mathbb S}_k(u^1_h,0) &= {\mathbb S}_k(u_h^1,s_h^-) - \int_0^{s^-_h} (u_h^1)^*\eta_k\\
				     &\leq c_n(k)+(T_*+1)\lambda_h + {\mathbb S}_k^{\alpha_{k^*}}(\xi)-{\mathbb S}_k^{\alpha_{k^*}} (u_h^1(s_h^-))\\
				     &< c_n(k)+(T_*+1)\lambda_h-\rho.
\end{align*}
Analogously, we get ${\mathbb S}_k(u^1_h,1)<c_n(k)+(T_*+1)\lambda_h-\rho$ and for $h$ large 
\begin{equation*}
\max\big\{{\mathbb S}_k(u^1_h,0) , {\mathbb S}_k(u^1_h,1)\big\} \leq c_n(k)-\lambda_h.
\end{equation*}

Now that \textbf{(A)} is established, we move to consider \textbf{(B1)}. Step 3 implies
\[
u_h\Big(\big\{{\mathbb S}_k(u_h^1,\,\cdot\,)>c_n(k)\big\}\Big)\subset\mathcal W,\quad \mbox{for $h$ large}.
\]
To show \textbf{(B2)} we need to prove the following statement: Given $\T\cdot\gamma\in\mathcal E_n(k)$ and the connected component $\mathcal W_\gamma$ of $\mathcal W$ such that $\gamma\in\mathcal W_\gamma$, there holds
\[
{\mathbb S}_k^{\gamma,u_h^1,s_h}(\gamma)=c_n(k),\quad\forall\, s_h\in \big\{{\mathbb S}_k(u^1_h,\,\cdot\,)>c_n(k)-\lambda_h\big\}\cap (u_h^1)^{-1}(\mathcal W_\gamma),
\]
for $h$ large and with ${\mathbb S}^{\gamma,u_h^1,s_h}$ defined as in Remark \ref{r:pri}.
Thus, let $s_h\in[0,1]$ be such that ${\mathbb S}_k(u_h^1,s_h)>c_n(k)-\lambda_h$ and $u_h^1(s_h)\in\mathcal W_\gamma$. We set
\[a_h\,:=\ {\mathbb S}_k^{\gamma,u_h^1,s_h}(\gamma)\,.\]
First, we claim that there is $\delta>0$ such that if $a_h\neq a_{h'}$, for $h,h'\in\N$, then
\[
|a_h-a_{h'}|\geq\delta.
\]
Indeed, thanks to Remark \ref{r:pri}, there exists $v_{h,h'}:\T\to \mathcal N^n$ such that
\[
a_h-a_{h'}=\int_\T v_{h,h'}^*\eta_k.
\]
The path $v_{h,h'}$ is the concatenation of four paths: the restriction $u_h^1|_{[0,s_h]}$, a path connecting $u^1_h(s_h)$ to $\gamma_j$ inside $\mathcal W_j$, a path connecting $\gamma_j$ to $u^1_{h'}(s_{h'})$ inside $\mathcal W_j$ and the restriction $u_{h'}^1|_{[0,s_{h'}]}$ traversed in the opposite direction. 
The claim then follows from Corollary \ref{cor:disc}. Thus, if we prove that $a_h\rightarrow c_n(k)$, we find $a_h=c_n(k)$ for $h$ large, as we wanted. By definition we have
\[
a_h - c_n(k)=\Big({\mathbb S}_k(u^1_h,s_h)-c_n(k)\Big)+\Big({\mathbb S}_k^\gamma(\gamma)-{\mathbb S}_k^\gamma\big(u_h^1(s_h)\big)\Big).
\]
The first term on the right tends to zero since $\big|{\mathbb S}_k(u^1_h,s_h)-c_n(k)\big| \leq (T_*+1)\lambda_h$, whereas the second term tends to zero since ${\mathbb S}^\gamma_k$ is a continuous function and $u_h^1(s_h)\to \gamma$. This very last statement follows by applying Step 3 to an \emph{arbitrary} separating neighborhood of $\mathcal E_n(k)$. We conclude that the sequence $(a_h)$ tends to $c_n(k)$, thus proving Step 4 and the entire lemma.  
\end{proof}

\section{The elimination lemma and the proof of the Main Theorem}\label{elimination}
In this section we prove Theorem \ref{theorem:main} by establishing Proposition \ref{lem:main} through the following elimination lemma.

\begin{lemma}\label{lem:el}
Let $\mathcal E$ be any essential family for $\mathcal P_n(k)$ and let $\gamma\in \mathcal M$ be such that $\T\cdot\gamma^\nu$ is an isolated vanishing circle of $\eta_k$, for all $\nu\in \N$. Let $\nu(\gamma)$ be the positive integer defined in Proposition \ref{iterationofmountainpasses}. If $\nu>\nu(\gamma)$, then $\mathcal E':=\mathcal E\setminus\T\cdot\gamma^\nu$ is also an essential family for $\mathcal P_n(k)$. 
\end{lemma}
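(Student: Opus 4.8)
The goal is to show that removing a single high iterate $\mathbb T\cdot\gamma^\nu$ from an essential family $\mathcal E$ leaves an essential family $\mathcal E'$. Concretely, given an arbitrary separating neighborhood $\mathcal W'=\bigcup_{j\in J'}\mathcal W_j$ for $\mathcal E'$, I must produce $u\in\mathcal P_n(k)$ and $\lambda>0$ satisfying conditions \textbf{(A)} and \textbf{(B)} of Definition \ref{d:ess}. The idea is to start from the data $u,\lambda$ that $\mathcal E$ provides for a suitable enlarged separating neighborhood, and then \emph{modify $u$ near $\mathbb T\cdot\gamma^\nu$} so as to push it below the level $c_n(k)-\lambda$ there, using the structure of the local primitive $\mathbb S_k^\gamma$ around the iterated vanishing circle.

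First I would fix, using Proposition \ref{iterationofmountainpasses} (applicable since $\nu>\nu(\gamma)$), a fundamental system of neighborhoods $\mathcal W_\nu\subseteq\mathcal V^\gamma$ of $\mathbb T\cdot\gamma^\nu$ with the connectedness property stated there. Enlarging $\mathcal W'$ by adjoining such a $\mathcal W_\nu$ (chosen small enough to be disjoint from the other pieces $\mathcal W_j$) gives a separating neighborhood $\mathcal W$ for the full family $\mathcal E$; applying essentiality of $\mathcal E$ to this $\mathcal W$ yields $u\in\mathcal P_n(k)$ and $\lambda>0$ with \textbf{(A)} and \textbf{(B)}. Now examine the sublevel set $S:=\{s\in[0,1]\mid \mathbb S_k(u,s)>c_n(k)-\lambda\}$. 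On the portion of $S$ mapped by $u$ into one of the $\mathcal W_j$ with $j\in J'$, nothing needs to change: conditions \textbf{(B1)}, \textbf{(B2)} already hold with respect to $\mathcal E'$. The only problematic part is $S_\gamma := S\cap u^{-1}(\mathcal W_\nu)$, where \textbf{(B2)} forces, via Remark \ref{r:pri}, that on this set $u$ sits in the sublevel set $\{\mathbb S_k^\gamma<\mathbb S_k^\gamma(\gamma^\nu)\}$ (after the affine shift $\mathbb S_k^{\gamma,u,s}$), and that $S_\gamma$ consists of closed subintervals of $[0,1]$ whose endpoints leave $\mathcal W_\nu$ (by \textbf{(A)} the endpoints $0,1$ are not in $S$, so each component of $S_\gamma$ is a compact subinterval).

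The key step is the surgery. On each component $[s_0,s_1]$ of $S_\gamma$, the two endpoints $u(s_0),u(s_1)$ lie in $\{\mathbb S_k^{\gamma,u,s_0}<\mathbb S_k^{\gamma,u,s_0}(\gamma^\nu)\}\cap\mathcal W_\nu$ in the same connected component of $\mathcal W_\nu$ (because $u|_{[s_0,s_1]}$ connects them inside $\mathcal W_\nu$ and the shift constant is locally constant along $u$ by closedness of $\eta_k$, using Lemma \ref{lem:top}.(ii)/Corollary \ref{cor:disc} to see the shift is the same at both ends — here I'd need a short argument that $\mathbb S_k(u,\cdot)$ doesn't jump by a nonzero period, exactly as in Step 4 of Lemma \ref{Struwesurface}). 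By Proposition \ref{iterationofmountainpasses} they therefore lie in the same component of the \emph{global} sublevel set $\{\mathbb S_k^\gamma<\mathbb S_k^\gamma(\gamma^\nu)\}$, so there is a path $\zeta$ connecting $u(s_0)$ to $u(s_1)$ along which $\mathbb S_k^\gamma$ — hence $\mathbb S_k(u,\cdot)$ after reattaching the tails $u|_{[0,s_0]}$ and $u|_{[s_1,1]}$ — stays strictly below $c_n(k)$. Replacing $u|_{[s_0,s_1]}$ by $\zeta$, doing this on every component of $S_\gamma$, and shrinking $\lambda$ slightly to absorb the finitely many gaps, produces a new $\tilde u\in\mathcal P_n(k)$ (the class membership is preserved because $\tilde u$ is homotopic to $u$ rel endpoints, so $\int \tilde u^*\tau=\int u^*\tau\neq0$) and a new $\tilde\lambda>0$ for which the ``bad'' set $\{\mathbb S_k(\tilde u,\cdot)>c_n(k)-\tilde\lambda\}$ no longer meets $\mathcal W_\nu$, while all the conditions relative to the remaining $\mathcal W_j$, $j\in J'$, persist. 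Thus $\tilde u,\tilde\lambda,\mathcal W'$ witness essentiality of $\mathcal E'$.

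**Main obstacle.** The delicate point is not the topology of the surgery but controlling the action level along the replacement path: Proposition \ref{iterationofmountainpasses} only gives a path inside the global sublevel set $\{\mathbb S_k^\gamma<\mathbb S_k^\gamma(\gamma^\nu)\}$, with the explicit warning (in the Remark following it) that this path may exit $\mathcal W_\nu$ and in particular need not stay in $\mathcal V^\gamma$'s convenient region — so I must argue that leaving $\mathcal W_\nu$ is harmless because the only thing that matters for \textbf{(A)}/\textbf{(B)} of $\mathcal E'$ is staying below $c_n(k)$ (which is below $c_n(k)$ by the strict inequality in the sublevel set) and not entering the other $\mathcal W_j$'s — the latter arranged by taking the replacement path after first shrinking all neighborhoods, or by a further small deformation via $\Phi^k$ as in Step 3 of the previous lemma. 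Threading this, together with the bookkeeping that the finitely many components of $S_\gamma$ can be handled uniformly (so a single $\tilde\lambda$ works), is the part that requires care.
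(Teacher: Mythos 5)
Your approach matches the paper's: enlarge the given separating neighborhood $\mathcal W'$ for $\mathcal E'$ by adjoining a small $\mathcal W_\nu$ from Proposition~\ref{iterationofmountainpasses}, apply essentiality of $\mathcal E$ to $\mathcal W=\mathcal W'\cup\mathcal W_\nu$ to get $u,\lambda$, and then do surgery on the portions of $u$ sitting in $\mathcal W_\nu$ using the connectedness-of-sublevels property. The verification that the surgered path still lies in $\mathcal P_n(k)$ is also in the spirit of the paper (though the paper argues via exactness of $\tau$ on $\mathcal V^\gamma$ from Lemma~\ref{lem:top}.(ii) rather than a homotopy-rel-endpoints argument; the latter is not available since $\mathcal V^\gamma$ is not simply connected, but the conclusion $\int v^*\tau=\int u^*\tau$ is the same). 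Two points, however, deserve correction.

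First, your ``main obstacle'' paragraph worries that the replacement path produced by Proposition~\ref{iterationofmountainpasses} might enter other pieces $\mathcal W_j$, and you propose remedies (pre-shrinking all neighborhoods, or a further push via $\Phi^k$). This concern is misplaced and the remedies are unnecessary. Condition \textbf{(B)} of Definition~\ref{d:ess} only needs to be checked on the set $\{{\mathbb S}_k(v,\cdot)>c_n(k)-\lambda'\}$; on the replaced pieces $K_j$, the relation ${\mathbb S}_k(v,s)={\mathbb S}_k^{\gamma}(v(s))+\big(c_n(k)-{\mathbb S}_k^{\gamma}(\gamma^\nu)\big)$ (which holds because the replacement path stays in $\mathcal V^\gamma$, where $\eta_k$ is exact with primitive ${\mathbb S}_k^\gamma$, and because the additive constant is pinned by \textbf{(B2)} at the endpoints) gives ${\mathbb S}_k(v,s)\leq c_n(k)-\lambda'$ for the uniform $\lambda'>0$ coming from the finitely many applications of Proposition~\ref{iterationofmountainpasses}. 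So these $s$ never enter the set where \textbf{(B1)}/\textbf{(B2)} must be checked, and it is entirely harmless for the replacement path to pass through the other $\mathcal W_j$'s. Also, your earlier assertion that on all of $S_\gamma$ ``$u$ sits in the sublevel set'' is not correct: \textbf{(B2)} only gives ${\mathbb S}_k^\gamma(u(s))<{\mathbb S}_k^\gamma(\gamma^\nu)$ where ${\mathbb S}_k(u,s)<c_n(k)$, which the endpoints of the surgery intervals satisfy but interior points need not; only the endpoints are used in Proposition~\ref{iterationofmountainpasses}, so this is harmless, but the statement as written is false.

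Second, the finiteness of the components of $S_\gamma$ is not automatic and you gloss over it (``shrinking $\lambda$ slightly to absorb the finitely many gaps''). The open set $\{{\mathbb S}_k(u,\cdot)>c_n(k)-\lambda\}\cap u^{-1}(\mathcal W_\nu)$ can a priori have infinitely many connected components. The paper handles this by observing that $[0,1]$ is covered by the two open sets $\{{\mathbb S}_k(u,\cdot)>c_n(k)-\lambda\}\cap u^{-1}(\mathcal W_\nu)$ and $\{{\mathbb S}_k(u,\cdot)<c_n(k)\}\cup u^{-1}(\mathcal W')$ — the key point being that \textbf{(B1)} forces $u(s)\in\mathcal W'$ whenever ${\mathbb S}_k(u,s)\geq c_n(k)$ and $u(s)\notin\mathcal W_\nu$ — and then extracting, by compactness of $[0,1]$, a finite subfamily of components $B_{a(1)},\dots,B_{a(r)}$. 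Only these finitely many are subjected to the surgery; the remaining (possibly infinite) tail of components is swallowed by the second open set. You need this covering step (or an equivalent one) to justify the word ``finitely'' in your sketch.
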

\begin{proof}
Fix $\nu>\nu(\gamma)$ and let $\mathcal W_\nu\subset\mathcal V^\gamma$ be an arbitrarily small connected neighborhood of $\T\cdot\gamma^\nu$ such that the condition given by Proposition \ref{iterationofmountainpasses} holds.

Let $\mathcal W'$ be a separating neighborhood for $\mathcal E'=\mathcal E\setminus\T\cdot\gamma^\nu$. Up to shrinking the neighborhoods, we can assume that $\mathcal W'\cap\mathcal W_\nu=\varnothing$, so that $\mathcal W:=\mathcal W'\cup\mathcal W_\nu$ is a separating neighborhood for $\mathcal E$. Since $\mathcal E$ is an essential family for $\mathcal P_{n}(k)$ there exist $u\in\mathcal P_{n}(k)$ and $\lambda>0$ as in Definition \ref{d:ess}. We claim that there exists a finite, and possibly empty, collection $K_1,...,K_{j_*}$ of closed intervals, which are pairwise disjoint, contained in $[0,1]$ and such that
\begin{align*}
(a)&\quad \left\{
\begin{aligned}
K_j &\subset u^{-1}(\mathcal W_\nu)\cap \big\{{\mathbb S}_k(u,\,\cdot\,)>c_n(k)-\lambda\big\}\\
\partial K_j &\subset\big\{{\mathbb S}_k(u,\,\cdot\,)<c_n(k)\big\}
\end{aligned}\quad\right|\quad \forall\,j\in\{1,...,j_*\},\\[0.5em]
(b)&\quad \big\{{\mathbb S}_k(u,\,\cdot\,)>c_n(k)-\lambda\big\}\setminus K\subset\mathcal W',\quad K:=K_1\cup...\cup K_r.
\end{align*}
The proof of the claim goes as follows. We observe that
\[
[0,1]=\Big(\big\{{\mathbb S}_k(u,\,\cdot\,)>c_n(k)-\lambda\big\}\cap u^{-1}(\mathcal W_\gamma)\Big)\cup\Big(\big\{{\mathbb S}_k(u,\,\cdot\,)<c_n(k)\big\}\cup u^{-1}(\mathcal W')\Big).
\]
We denote by $\{B_a\ |\ a\in A\}$ the set of connected components of the open set $\big\{{\mathbb S}_k(u,\,\cdot\,)>c_n(k)-\lambda\big\}\cap u^{-1}(\mathcal W_\gamma)\subset (0,1)$, where $A$ is some set of indices. The $B_a$'s are pairwise disjoint and each of them is an open interval. Since $[0,1]$ is compact, there exists a finite family $B_{a(1)},...,B_{a(r)}$ such that  
\[
[0,1] = \Big(\bigcup_{j=1}^{r}B_{a(j)}\Big)\cup\Big(\big\{{\mathbb S}_k(u,\,\cdot\,)<c_n(k)\big\}\cup u^{-1}(\mathcal W')\Big).
\]
The claim follows by taking $K_j$ to be any closed interval contained in $B_{a(j)}$ with the property that $\partial K_j\subset \{{\mathbb S}_k(u,\,\cdot\,)<c_n(k)\}\cup u^{-1}(\mathcal W')$.

Now that the claim has been proved, we can apply Proposition \ref{iterationofmountainpasses} with $\{\beta_0,\beta_1\}=\partial K_j$, for all $j=1,...,r$, and obtain paths $v_j:K_j\to\mathcal V^\gamma$ such that
\begin{align}\label{eq:u'j}
v_j|_{\partial K_j}=u|_{\partial K_j},\qquad v_j(K_j)\ \subset\ \big\{{\mathbb S}^\gamma_k\leq {\mathbb S}_k^\gamma(\gamma^\nu)-\lambda'\big\},
\end{align}
for some $\lambda'\in(0,\lambda]$ independent of $j$. We now modify the path $u$ to a path  $v:[0,1]\to \mathcal N^n$ requiring
\begin{align}\label{eq:u'}
(i)\quad v=u\ \  \mathrm{on}\ \ [0,1]\setminus K,\qquad (ii)\quad v|_{K_j}=v_j,\quad \forall\,j=1,...,r.
\end{align}
As $\{0,1\}\cap K=\varnothing$, we see that $v(0)=u(0)=u(1)=v(1)$. Moreover, since $u$ and $v$ coincide on the complement of the open set $\mathcal V^\gamma$, we have
\begin{equation*}
\int_0^1v^*\tau = \int_0^1u^*\tau \neq 0
\end{equation*}
by Lemma \ref{lem:top}, so that $v\in\mathcal P_n(k)$. For the same reason ${\mathbb S}_k(v,\,\cdot\,)={\mathbb S}_k(u,\,\cdot\,)$ on $[0,1]\setminus K$. Let now $j$ be in $\{1,...,r\}$. For all $s\in K_j$, there holds
\[
\lambda'\leq {\mathbb S}_k^{\gamma}(\gamma^\nu)-{\mathbb S}_k^{\gamma}(v(s))=c_n(k)-{\mathbb S}_k(v,s)
\]
where we used \eqref{eq:u'j} and property \textbf{(B2)}. This shows that
\[
K_j\subset \big\{{\mathbb S}_k(v,\,\cdot\,)\leq c_n(k)-\lambda'\big\}.
\]
Applying this argument to every $j$, we conclude that
\[
\big\{{\mathbb S}_k(v,\,\cdot\,)>c_n(k)-\lambda'\big\} \subset v^{-1}(\mathcal W').
\]
Thus, $\mathcal E'$ is also an essential family for $\mathcal P_n(k)$ and the lemma is proved.
\end{proof}

\begin{proof}[Proof of Proposition \ref{lem:main}]
We only need to consider the case in which $\alpha_{k^{*}}$ belongs to a connected component $\mathcal N$ of $\mathcal M$ different from $\mathcal M_0$. We set 
\begin{equation}
I' := \bigcap_{n\in\N}I'_n,\qquad I'_n:=\Big \{k\in I\ \Big | \ c_n\ \text{is differentiable at }k\Big\}.
\label{definizioneJ}
\end{equation}
By the Lebesgue differentiation theorem every function $c_n$ is differentiable almost everywhere, since $c_n$ is monotone by Lemma \ref{lem:mono}. In particular, every $I'_n\subset I$ is a full-measure set. The same is true for $I'$ being a countable intersection of full-measure sets in a space of finite measure.

We claim that for all $k\in I'$ there exist infinitely many closed magnetic geodesics with energy $k$ in $\bigcup_{n}\mathcal N^n$. Suppose by contradiction that there exists $k\in I'$ such that the zero-set of $\eta_{k}$ in $\bigcup_n\mathcal N^n$ consists of finitely many circles
\[\T \cdot \beta_1, \ldots ,\T\cdot \beta_r
\] together with their iterates. In particular, all the vanishing circles in $\bigcup_n\mathcal N^n$ are isolated. We apply Proposition \ref{iterationofmountainpasses} to the orbits $\beta_1,...,\beta_r$ and get numbers $\nu(\beta_1),...,\nu(\beta_r)\in\N$. We define the family
\[
\mathcal F:=\bigcup_{1\leq j\leq r}\bigcup_{1\leq \nu\leq\nu(\beta_j)}\T\cdot\beta_j^\nu.
\]
Since $\mathcal N^{n_1}\neq \mathcal N^{n_2}$ if $n_1\neq n_2$ and $\mathcal F$ is a finite union of circles, there exists $n\in\N$ such that $\mathcal N^n\cap\mathcal F=\varnothing$. By definition of $I'$ and the fact that all vanishing circles in $\mathcal N^n$ are isolated, we can apply Lemma \ref{Struwesurface} and get an essential family $\mathcal E_n(k)\subset\mathcal N^n$ for $\mathcal P_n(k)$, which is a finite union of circles. By Lemma \ref{lem:el}
\[
\mathcal E_n(k)\cap\mathcal F=\varnothing
\]
is an essential family, as well. Since an essential family cannot be empty by Remark \ref{r:empty}, we get a contradiction, which proves the proposition.
\end{proof}

\bibliographystyle{amsalpha}      
\bibliography{wes}   

\end{document}